\pgfplotsset{compat=1.12}
\newtheorem{cl}{Claim}[section]
\newtheorem{prop}{Proposition}[section]
\numberwithin{claimcounter}{cl}
\newcommand{\myshortestpath}	{{path-dominated shortest path}\xspace}
\newcommand{\K}				{{\mathcal{K}}}
\newcommand{\reals}			{{\mathbb{R}}}
\newcommand{\mysizeF}			{{tight-size function}\xspace}
\newcommand{\MysizeF}			{{Tight-size function}\xspace}
\newcommand{\myS}				{{\mu}}
\begin{document}
\title{ Efficient algorithms for computing a minimal homology basis}
\author{Tamal K. Dey \inst{1}\footnote{\email{tamaldey@cse.ohio-state.edu}} \and Tianqi Li \inst{1}\footnote{\email{li.6108@osu.edu}} \and Yusu Wang\inst{1}\footnote{\email{yusu@cse.ohio-state.edu}}}
\institute{
Department of Computer Science and Engineering, The Ohio State University, Columbus}
\maketitle
\begin{abstract}

    Efficient computation of shortest cycles which form a homology basis under $\mathbb{Z}_2$-additions in a given simplicial complex $\mathcal{K}$ has been researched actively in recent years. When the complex $\mathcal{K}$ is a weighted graph with $n$ vertices and $m$ edges, the problem of computing a shortest (homology) cycle basis is known to be solvable in $O(m^2n/\log n+ n^2m)$-time. Several works \cite{borradaile2017minimum, greedy} have addressed the case when the complex $\mathcal{K}$ is a $2$-manifold. The complexity of these algorithms depends on the rank  $g$ of the one-dimensional homology group of $\mathcal{K}$. This rank $g$ has a lower bound of $\Theta(n)$, where $n$ denotes the number of simplices in $\mathcal{K}$, giving an $O(n^4)$ worst-case time complexity for the algorithms in \cite{borradaile2017minimum,greedy}. This worst-case complexity is improved in \cite{annotation} to  $O(n^\omega + n^2g^{\omega-1})$ for general simplicial complexes where $\omega< 2.3728639$ \cite{le2014powers} is the matrix multiplication  exponent. Taking $g=\Theta(n)$,  this provides an $O(n^{\omega+1})$ worst-case algorithm. In this paper, we improve this time complexity. Combining the divide and conquer technique from \cite{DivideConquer} with the use of annotations from \cite{annotation}, we present an algorithm that runs in $O(n^\omega+n^2g)$ time giving the first $O(n^3)$ worst-case algorithm for general complexes. If instead of minimal basis, we settle for an  approximate basis, we can improve the running time even further. We show that a $2$-approximate minimal homology basis can be computed in $O(n^{\omega}\sqrt{n \log  n})$ expected time. We also study more general measures for defining the minimal basis and identify reasonable conditions on these measures that allow computing a minimal basis efficiently. %For higher dimensional cycles(with dimension $d\geq 1$), we improve the running time for computing a minimum homology basis from $O(\beta_dn^4)$ to $O(n^{\omega+1})$ under a size function defined in \cite{chen2010measuring} where $\beta_d$ is the rank of $d$-dimensional homology group.

\end{abstract}

\section{Introduction}
Many applications in science and engineering require computing ``features'' in a shape that is finitely represented by a simplicial complex. These features sometimes include topological features such as ``holes'' and ``tunnels'' present in the shape. A concise definition of these otherwise vague notions can be obtained by considering homology groups and their representative cycles. In particular, a one-dimensional homology basis, that is,
%a maximally independent set of cycles in the $1$-skeleton of the input simplicial complex can be taken as a representative of the ``holes" and ``tunnels" present in the shape.
a set of independent cycles in the $1$-skeleton of the input simplicial complex whose homology classes form a basis for the first homology group, can be taken as a representative of the ``holes'' and ``tunnels'' present in the shape. However, instead of any basis, one would like to have a homology basis whose representative cycles are small under some suitable metric, thus bringing the `geometry' into picture along with topology. 

When the input complex is a graph with $n$ vertices and $m$ edges, the homology basis coincides with what is called the cycle basis and its minimality is measured with respect to the total weights of the cycles assuming non-negative weights on the edges. A number of efficient algorithms have been designed to compute such a minimal cycle basis for a weighted graph~\cite{borradaile2017minimum,depina,horton1987polynomial,DivideConquer, label}. The best known algorithm for this case runs in $O(m^2n/\log n+ n^2m)$ \cite{label}. 

When the input is a simplicial complex, one dimensional homology basis is determined by the simplices of dimension up to 2. Thus, without loss of generality, we can assume that the complex has dimension at most $2$, that is, it consists of vertices, edges, and triangles. The $1$-skeleton of the complex is a graph (weighted if the edges are). Therefore, one can consider a minimal cycle basis in the $1$-skeleton. However, the presence of triangles makes some of these basis elements to be trivial in the
homology basis. Therefore, the computation of the minimal homology basis in a simplicial complex differs from the minimal cycle basis in a graph.
In this paper, we show that the efficient algorithms of \cite{DivideConquer} for computing \emph{a minimal cycle basis} can be adapted to computing \emph{a minimal homology basis} in a simplicial complex (by combining with an algorithm \cite{annotation} to compute the so-called annotations). 
%In this paper, we show that the efficient algorithms \yusu{Tianqi, Refs here} for computing a minimal cycle basis can be adapted using an algorithm to compute so-called {\em annotations} \cite{annotation} to compute a minimal homology basis in a simplicial complex. 
In the process we improve the current best time complexity bound for computing a minimal homology basis and also extend these results to more generalized measures.

More specifically, for the special case of a combinatorial $2$-manifold with weights on the edges,
Erickson and Whittlesey \cite{greedy} gave an $O(n^2\log n+gn^2+g^3n)$-time algorithm to compute a minimal homology basis where $n$ is the total number of simplices and $g$ is the rank of the first homology group. Dey et al.~\cite{dey2010approximating} and Chen and Friedman~\cite{chen2010measuring} generalized the results above to arbitrary simplicial complexes. Busaryev et al. \cite{annotation} improved the running time of this generalization from $O(n^4)$ \cite{dey2010approximating} to $O(n^{\omega}+n^2g^{\omega-1})$ where $\omega< 2.3728639$~\cite{le2014powers} is the matrix multiplication exponent. This gives the best known $O(n^{1+\omega})$ worst-case time algorithm when $g=\Theta(n)$. 
In Section \ref{sec:h1}, combining the divide and conquer approach of \cite{DivideConquer} with the use of annotations~\cite{annotation}, we %obtain an $O(n^2g + n^\omega)$ time algorithm which is $O(n^3)$ in the worst case when $g=\Omega(n)$.
develop an improved algorithm to compute a minimal 1-dimensional homology basis for an arbitrary simplicial complex in only $O(n^2 g + n^\omega)$ time. Considering $g=\Theta(n)$, this gives the first $O(n^3)$ worst-case time algorithm for the problem.

%As for high dimension $d$ with $d>1$ in which computing a minimum homology basis under weight is NP-hard \cite{chen2010hardness}, Chao Chen and Daniel Freedman \cite{chen2010measuring} took a different measure which is the smallest geodesic ball that contains the cycle. Under this size, an algorithm to compute a minimum homology basis in any dimension $d$ exists with time complexity $O(\beta_d n^4)$ where $\beta_d$ is the rank of the $d$-dimensional homology group $\mathsf{H}_d$. 

%We first present an algorithm to compute a 1-dimensional minimum homology basis which leads to improved time bounds $O(n^{\omega}+n^2g)$. At a high level, our algorithm is based on a divide-and-conquer technique from \cite{DivideConquer} which used the idea of de Pina \cite{depina}. In the algorithm, we always maintain a set of support vectors which forms a basis of the subspace orthogonal to the space generated by those already computed cycles. Every time when we compute a new cycle, a minimum cycle that is nonorthogonal to the given support vector $S$ can be found. After that, we continue to update the support vectors to satisfy the``orthogonal" condition. The final output will be a minimum homology basis.

%Instead of a minimum homology basis, if we settle for an approximation, we can further improve the time complexity. 
We can further improve the time complexity if we allow for approximations. An algorithm to compute a 2-approximate minimal homology basis are given in Section \ref{sec:approximation} running in $O(n^{\omega}\sqrt{n\log n})$ expected time. 
% in which we first compute a 2-approximate minimum cycle basis and then extract a homology basis which is proved to be a 2-approximate minimum homology basis.

All of the above algorithms operate by computing a set of candidate cycles that necessarily includes at least one minimal homology basis and then selecting one of these minimal bases. The standard proof \cite{greedy} of the fact that the candidate set includes a minimal basis uses the specific distance function based on the shortest path metric and a size function that assigns total weight of the edges in a cycle as its size. In Section \ref{sec:generalization}, we identify general conditions for the distance and size function so that the divide and conquer algorithm still works without degrading in time complexity. This allows us to consider distance function beyond the shortest path metric and the size function beyond the total weight of edges as we illustrate with two examples.
Specifically, we can now compute a minimal homology basis whose size is induced by a general map $F:\K\to Z$ for any metric space $Z$. 
%We then notice that the divide-and-conquer technique is actually aiming at finding a basis in a given set. Thus it is possible to extend the distance and the size function so that under such a distance and size function, the algorithm still works well. We give such an extension and two examples of such distances and sizes.

%In high dimension, it is known from \cite{chen2010hardness} that computing a minimum homology basis under volume as the size is NP-hard. It follows from \cite{chen2010measuring} that there is a measure of cycles under which an algorithm to compute a minimum homology basis in polynomial time exists, in time $O(\beta_dn^4)$ where $\beta_d$ is the rank of $d$-dimensional homology group $\mathsf{H}_d$. We improved this algorithm, using persistent algorithm \cite{edelsbrunner2008persistent} as well as annotations for $d$-simplices \cite{annotation}, so that our time complexity is $O(n^{\omega+1})$ which is better when $\beta_d=\Omega(n)$.

%%%%%%%%%BACKGROUND%%%%%%%%%%
\section{Background and notations}
\label{sec:notations}

\noindent In this paper, we are interested in computing a minimal basis for the 1-dimensional homology group of a simplicial complex over the field $\mathbb{Z}_2$. In this section we briefly introduce some relevant concepts here; the details appear in standard books on algebraic topology such as \cite{hatcher2002algebraic}.

\paragraph{Homology.} Let $\mathcal{K}$ be a connected simplicial complex. A $d$-chain $c$ is a formal sum, $c =\sum a_i\sigma_i$ where the $\sigma_i$s are the $d$-simplices of $\K$ and the $a_i$s are the coefficients with $a_i\in \mathbb{Z}_2$. We use $\mathsf{C}_d$ to denote the group of $d$-chains which is formed by the set of $d$-chains together with the addition. Note that there is a one-to-one correspondence between the chain group $\mathsf{C}_d$ and the family of subsets of $\mathcal{K}_d$ where $\mathcal{K}_d$ is the set of all $d$-simplices. Thus $\mathsf{C}_d$ is isomorphic to the space $(\mathbb{Z}_2)^{n_d}$ where $n_d$ is the number of $d$-simplices in $\K$. Naturally all $d$-simplices in $\mathcal{K}$ form a basis of $\mathsf{C}_d$ in which the $i$-th bit of the coordinate vector of a $d$-chain indicates whether the corresponding $d$-simplex appears in the chain.

The boundary of a $d$-simplex is the sum of all its $(d-1)$-faces. This can be interpreted and extended to a $d$-chain as a \textit{boundary map} $\partial_d : \mathsf{C}_d \to  \mathsf{C}_{d-1}$, where the boundary of a chain is defined as the sum of the boundaries of its simplices. A $d$-cycle $c$ is a $d$-chain with empty boundary, $\partial_d c=0$. Since $\partial_d$ commutes with addition, we have the group of $d$-cycles, $\mathsf{Z}_d$, which is the kernel of $\partial_d$, $\mathsf{Z}_d:=ker{\partial_d}$. 
A $d$-boundary $c$ is a $d$-chain that is the boundary of a $(d + 1)$-chain, $c =\partial_{d+1} b$ for some $b\in \mathsf{C}_{d+1}$. The group of $d$-boundaries $\mathsf{B}_d$ is the image of $\partial_{d+1}$, that is, $\mathsf{B}_d := im \partial_{d+1}$. Notice that $\mathsf{B}_d$  is a subgroup of $\mathsf{Z}_d$. Hence we can consider the quotient $\mathsf{Z}_d/\mathsf{B}_d$ which constitutes the $d$-dimensional homology group denoted as $\mathsf{H}_d$. Each element in $\mathsf{H}_d$, called a homology class, is an equivalence class of $d$-cycles whose difference is always in $\mathsf{B}_d$. Two cycles are said to be $homologous$ if they are in the same homology class.
% We denote the $d$-th $Betti$ $number$, the dimension of the $\mathsf{H}_d$, as $\beta_d$ and the $1$-st $Betti$ $number$ as $g$ in particular.

%The groups $\mathsf{Z}_d$, $\mathsf{B}_d$ and $\mathsf{H}_d$ defined over $\mathbb{Z}_2$ form vector spaces and hence admit bases called cycle basis, boundary basis and homology basis. We are concerned with the homology bases of $\mathsf{H}_d$ and particularly in $\mathsf{H}_1$. The cardinality of a basis of $\mathsf{H}_d$ is called the $d$-th {\em Betti number} of $\mathcal{K}$. We use $g$ to denote the $1$-st Betti number which is the dimension $\mathsf{H}_1$ when interpreted as  a vector space.

%\yusu{Probably putting def. of homology basis, min homology basis here more formally.} 
%{Tamal: I added the definitions, check!}\\
Under $\mathbb{Z}_2$ coefficients, the groups $\mathsf{C}_d$, $\mathsf{Z}_d$, $\mathsf{B}_d$ and $\mathsf{H}_d$ are all vector spaces.
A basis of a vector space is a set of vectors of minimal cardinality that generates the entire vector space. 
We are concerned with the homology bases of $\mathsf{H}_d$ and particularly in $\mathsf{H}_1$ (more formally below). 
%The cardinality of a basis of $\mathsf{H}_d$ is called the $d$-th {\em Betti number} of $\mathcal{K}$. 
We use $L = rank(\mathsf{Z}_1)$ to denote the dimension of vector space $\mathsf{Z}_1$ and use $g = rank(\mathsf{H}_1)$ to denote the \emph{$1$-st Betti number} of $\K$, which is the dimension of vector space $\mathsf{H}_1$. 
% when interpreted as  a vector space.
\begin{itemize}
\item 	A set of cycles $C_1,\cdots, C_L$, with $L = rank(\mathsf{Z}_1)$, that generates the cycle space $\mathsf{Z}_1$ is called its {\em cycle basis}. 
\item For any $1$-cycle $c$, let $[c]$ denote its homology class. A set of homology classes $\{[C_1], . . . , [C_g]\}$ that constitutes a basis of $\mathsf{H}_1$ is called a {\em homology basis}. For simplicity, we also say a set of cycles $\{C_1, C_2, \cdots, C_g\}$ is a homology basis if their corresponding homology classes $[C_1], [C_2], \cdots, [C_g]$ form a basis for $\mathsf{H}_1(\mathcal{K})$.
\item Let $\myS: \mathsf{Z}_1\rightarrow \mathbb{R}^+\cup{\{0\}}$ be a size function that assigns a non-negative weight to each cycle $C\in \mathsf{Z}_1$. A cycle or homology basis $C_1,\cdots,C_l$ is called {\em minimal} if $\sum_{i=1}^l \myS(C_i)$ is minimal among all bases of $\mathsf{Z}_1$ ($l=L$) or $\mathsf{H}_1(\mathcal{K})$ ($l=g$) respectively.
\end{itemize} 
%Earliest Basis
%\textbf{Earliest Basis.}
%Our approaches later would use the following concept from \cite{annotation}.
%\begin{definition}
%\rm{(Earliest Basis)}. \textit{Given a matrix $A$ with rank $r$, the set of columns $B_{opt} = \{a_{i_1}, \cdots , a_{i_r} \}$ is called the \textbf{earliest basis} if the column indices $\{i_1,\ldots , i_r\}$ are the lexicographically smallest index set such that the corresponding columns of $A$ have full rank.}
%\end{definition}

%%A convenient view to consider the earliest basis is that a column vector $a$ of a matrix $A$ is in the earliest basis if and only if it does not belong to the subspace generated by those column vectors in $A$ to its left. 

%\begin{prop}\rm{\cite{annotation}} \label{prop:annotation}
%\textit{Let $A$ be an $m\times n$ matrix of rank $r$ with entries over $\mathbb{Z}_2$ where $n \leq m$, then there is an $O(m n^{\omega-1})$ time algorithm to compute the earliest basis $B_{opt}$ of $A$.}
%\end{prop}

\paragraph{Annotation.} To compute a minimal homology basis of a simplicial complex $\mathcal{K}$, it is necessary to have a way to represent and distinguish homology classes of cycles. Annotated simplices have been used for this purpose in earlier works: For example, Erickson and Wittlesey~\cite{greedy} and Borradaile et al.~\cite{borradaile2017minimum} used them for computing optimal homology cycles in surface embedded graphs. Here we use a version termed as {\em annotation} from \cite{annotation} which gives an algorithm to compute them in matrix multiplication time for general simplicial complexes. An annotation for a $d$-simplex is a $g$-bit binary vector, where $g = rank(\mathsf{H}_d(\K))$. The annotation of a cycle $z$, which is the sum of annotations of all simplices in $z$, provides the coordinate vector of the homology class of $z$ in a pre-determined homology basis. More formally, 
\begin{definition}[Annotation] \label{TimeAnnotation}
\textit{Let $\K$ be a simplicial complex and $\K_d$ be the set of $d$-simplices in $\K$. An annotation for $d$-simplices is a function $a: \mathcal{K}_d \to (\mathbb{Z}_2)^{g}$ with the following property: any two $d$-cycles $z$ and $z'$ are homologous if and only if $$\sum_{\sigma\in z}a(\sigma)=\sum_{\sigma\in z'}a(\sigma)$$
Given an annotation $a$, the annotation of any $d$-cycle $z$ is defined by $a(z)=\sum_{\sigma\in z}a(\sigma)$.}
\end{definition}
\begin{prop}[\cite{annotation}]
There is an algorithm that annotates the $d$-simplices in a simplicial complex with $n$ simplices in $O(n^{\omega})$ time.
\end{prop}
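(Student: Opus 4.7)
The plan is to realize the annotation as a specific $\mathbb{Z}_2$-linear projection $a:\mathsf{C}_d\to(\mathbb{Z}_2)^g$ whose restriction to $\mathsf{Z}_d$ coincides with the quotient map $\mathsf{Z}_d\to\mathsf{Z}_d/\mathsf{B}_d$ expressed in a chosen basis, and then to compute this projection via a small number of $\mathbb{Z}_2$-matrix operations on matrices of size at most $n\times n$. Every operation I need (rank, kernel, basis extension, inversion) reduces to matrix multiplication through standard fast linear algebra (e.g.\ the Bunch--Hopcroft triangulation scheme), giving an overall $O(n^\omega)$ budget.

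First I would encode the boundary maps $\partial_d,\partial_{d+1}$ as $\mathbb{Z}_2$-matrices $D_d,D_{d+1}$ of dimensions at most $n\times n$. Using fast Gaussian elimination I would extract a basis $\{b_1,\ldots,b_r\}$ of $\mathsf{B}_d=\operatorname{im} D_{d+1}$ and a basis of $\mathsf{Z}_d=\ker D_d$ of dimension $L$. Extending $\{b_1,\ldots,b_r\}$ inside $\mathsf{Z}_d$ gives cycles $z_1,\ldots,z_g$, with $g=L-r$, whose classes form a basis of $\mathsf{H}_d$. I would then extend $\{b_1,\ldots,b_r,z_1,\ldots,z_g\}$ further to a basis of $\mathsf{C}_d$ by greedily adjoining standard basis vectors $e_{i_1},\ldots,e_{i_k}$ indexed by $d$-simplices not yet in the span, assembling an invertible $n_d\times n_d$ matrix
\begin{equation*}
M=[\,b_1\cdots b_r\ \mid\ z_1\cdots z_g\ \mid\ e_{i_1}\cdots e_{i_k}\,].
\end{equation*}

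The annotation of the $j$-th $d$-simplex $\sigma_j$ is then defined as the middle $g$-coordinate block of the $j$-th column of $M^{-1}$, i.e.\ the coefficients of $z_1,\ldots,z_g$ in the unique $M$-expansion of $\sigma_j$. Inverting $M$ in $O(n^\omega)$ time and reading off rows $r{+}1,\ldots,r{+}g$ delivers the full annotation. For correctness, any $d$-cycle $z\in\mathsf{Z}_d$ has zero coordinates on the $e_{i_t}$-block in the $M$-expansion, so by linearity $a(z)=\sum_{\sigma\in z}a(\sigma)$ equals the vector $(\beta_1,\ldots,\beta_g)$ of $z_j$-coefficients in $z=\sum_i\alpha_i b_i+\sum_j\beta_j z_j$. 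Since $[z]=\sum_j\beta_j[z_j]$ in the chosen $\mathsf{H}_d$-basis $\{[z_1],\ldots,[z_g]\}$, two cycles are homologous iff their annotations agree, which is exactly the property demanded by Definition~\ref{TimeAnnotation}.

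The main obstacle I expect is making the basis extensions and the inversion \emph{all} genuinely run in $O(n^\omega)$, rather than the naive $O(n^3)$ one gets from iterating rank updates. This requires invoking the Bunch--Hopcroft style black boxes for fast triangulation and kernel computation, and arranging the two successive extensions ($\mathsf{B}_d\subset\mathsf{Z}_d\subset\mathsf{C}_d$) as column operations on a single augmented matrix so that $M$ and its inverse are produced in one fast-matrix-multiplication pass rather than through a sequence of $\Theta(n)$ sequential pivot steps.
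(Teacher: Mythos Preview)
The paper does not give its own proof of this proposition: it is stated with the citation \cite{annotation} and used as a black box. Your sketch is essentially the construction carried out in that cited work (Busaryev et al.): build a basis of $\mathsf{C}_d$ adapted to the filtration $\mathsf{B}_d\subset\mathsf{Z}_d\subset\mathsf{C}_d$, and read off annotations from the appropriate rows of the inverse change-of-basis matrix, with all linear-algebra steps carried out in matrix-multiplication time via fast elimination. So your proposal is correct and matches the source the paper defers to; there is simply nothing in the present paper to compare it against beyond the citation.
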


%%%%%%%%SECTION3%%%%%%%%%%%%%%%%%%%%
\section{Minimal homology basis} \label{sec:h1}
\noindent In this section, we describe an efficient algorithm to compute a minimal homology basis of the 1-dimensional homology group $\mathsf{H}_1(\K)$. The algorithm uses the divide and conquer technique from \cite{DivideConquer} where they compute a minimal \emph{cycle} basis in a weighted graph. The authors in~\cite{borradaile2017minimum} adapted it for computing optimal homology basis in surface embedded graphs. We adapt it here to simplicial complexes using edge annotations~\cite{annotation}.

More specifically, let $\mathcal{K}$ be a simplicial complex with $n$ simplices -- Since we are only interested in 1-dimensional homology basis, it is sufficient to consider all simplices with dimension up to 2, namely vertices, edges, and triangles. Hence we assume that $\mathcal{K}$ contains only simplices of dimension at most 2.
%The homology group $\mathsf{H}_1$ admits a basis of size $g=rank(\mathsf{H}_1(\mathcal{K}))$ which is a set of $g$ homology classes $[C_i]$ of 1-dimensional cycles $C_i$.
Assume that the edges in $\mathcal{K}$ are weighted with non-negative weights.
Given any homology basis $\{C_1, \ldots, C_g\}$ where $g = rank(\mathsf{H}_1(\K))$, we define the \emph{size $\myS(C)$ of a cycle $C\in Z_1(\mathcal{K})$} as the \emph{total weights} of its edges. As defined in Section \ref{sec:notations}, the problem of computing a minimal homology basis of $\mathsf{H}_1$ is now to find a basis $\mathcal{C}=\{C_1, C_2, \cdots, C_g\}$ such that the sum of $\sum_{i=1}^g \myS(C_i)$ is the smallest.\\
%
%Since we are only interested in 1-cycles, it is sufficient to consider all simplices with dimension up to 2, namely vertices, edges, and triangles. Hence we assume the dimension $dim$ of the complex $\mathcal{K}$ is at most 2.
%

The high-level algorithm to compute such a minimal homology basis of $\mathsf{H}_1$ group proceeds as follows. %Algorithm \ref{alg} presents. 
First, we need to annotate all 1-simplices implemented by the algorithm of \cite{annotation}. Then we compute a candidate set of cycles which includes a minimal homology basis. At last, we extract such a minimal homology basis from the candidate set. 
%step 1 of the algorithm (Line 2) is implemented by the algorithm of \cite{annotation}.

%\begin{algorithm}
%\caption{Compute a minimal homology basis}
%\label{alg}
% \begin{algorithmic}[1] 
% 	\Procedure{HomologyBasis}{$\mathcal{K}$}
%	\State Annotate all 1-simplices in $\mathcal{K}$.
%	\State Generate a candidate set $\mathcal{G}$ of cycles which includes a minimal homology basis.
%	\State Call \Call{CycleBasis}{$\mathcal{G}$} to find a minimal homology basis in the candidate set.
%	\EndProcedure
%\end{algorithmic}
%\end{algorithm}

%candidate set
\paragraph{Candidate set.}
We now describe the step to compute a candidate set $\mathcal{G}$ of cycles that contains a minimal homology basis. We use the shortest path tree approach which dates back to Horton's algorithm for a minimal cycle basis of a graph~\cite{horton1987polynomial}. It was also applied in other earlier works, e.g. \cite{dey2010approximating, greedy}. We first generate a candidate set $\mathcal{G}(p)$ for every vertex $p\in vert(\mathcal{K})$, where $vert(\K)$ is the set of vertices of $\K$. Then we take the union of all $\mathcal{G}(p)$ and denote as $\mathcal{G}$, i.e. $\mathcal{G}=\cup_{p\in vert(\K)} \mathcal{G}(p)$. To compute $\mathcal{G}(p)$, first we construct a shortest path tree $T_p$ rooted at $p$. Let $\Pi_p(u, v)$ denote the unique path connecting two vertices $u$ and $v$ in $T_p$. Then each nontree edge $e=(u, v)$ generates a cycle $C(p, e)=e \circ \Pi_p(u, v)$. The union of all such cycles constitutes the candidate set of the vertex $p$, i.e. $\mathcal{G}(p)=\cup_{e\in edge(\K)\setminus E_{p}}C(p, e)$ where $E_{p}$ is the set of tree edges in $T_p$. Note that the number of cycles in $\mathcal{G}(p)$ is $O(n)$ for each vertex $p \in vert(\K)$. Hence there are $O(n^2)$ candidate cycles in $\mathcal{G}$ in total. They, together with their sizes, can be computed in $O(n^2\log n)$ time. 

\begin{proposition}[\cite{dey2010approximating, greedy}] \label{candidate}
The candidate set $\mathcal{G}$ has $O(n^2)$ cycles and admits a minimal homology basis.
\end{proposition}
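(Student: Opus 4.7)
The size bound will be immediate from the construction: for each vertex $p$, $\mathcal G(p)$ contains one cycle $C(p,e)$ per non-tree edge of the shortest-path tree $T_p$, so $|\mathcal G(p)|\le |edge(\K)|=O(n)$, and summing over the $O(n)$ vertices of $\K$ gives $|\mathcal G|=O(n^2)$.

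For the substantive claim---that $\mathcal G$ admits a minimal homology basis---I plan to use an exchange argument in the style of Horton. Starting from an arbitrary minimal homology basis $H=\{C_1,\dots,C_g\}$, I will show that whenever some $C_i\notin \mathcal G$ there exists $C_i'\in \mathcal G$ with $\myS(C_i')\le \myS(C_i)$ such that $\{C_1,\dots,C_{i-1},C_i',C_{i+1},\dots,C_g\}$ is still a homology basis. By minimality of $H$ the inequality must be an equality, so the new basis is again minimal; since $C_i'\in \mathcal G$ while no existing member of $H\cap \mathcal G$ is disturbed, $|H\cap \mathcal G|$ strictly increases, and the procedure terminates in at most $g$ rounds with a minimal basis sitting entirely inside $\mathcal G$.

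To construct the replacement, I would pick any $p\in C_i$ and consider the shortest-path tree $T_p$. The fundamental cycles $\{C(p,e):e\in edge(\K)\setminus E_p\}$ form a basis of $\mathsf Z_1$, so $C_i=\sum_{e\in N_i}C(p,e)$ as $1$-chains, where $N_i$ is the set of non-tree edges of $T_p$ that appear in $C_i$. Projecting to $\mathsf H_1$ gives $[C_i]=\sum_{e\in N_i}[C(p,e)]$; since $[C_i]$ is independent of $\{[C_j]:j\neq i\}$, at least one summand $[C(p,e^*)]$ must fail to lie in the span of $\{[C_j]:j\neq i\}$ as well, and I take $C_i'=C(p,e^*)\in \mathcal G$.

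The main obstacle will be the size inequality $\myS(C_i')\le \myS(C_i)$. Let $e^*=(u,v)$. Removing $e^*$ from $C_i$ leaves a $u$-$v$ path inside $C_i$; since $p\in C_i$, this path splits as $P_u\cup P_v$ where $P_u$ is a $u$-$p$ path and $P_v$ a $p$-$v$ path in $C_i$, so $w(P_u)\ge d(u,p)$ and $w(P_v)\ge d(v,p)$ for $d$ the shortest-path distance. Because $T_p$ is a shortest-path tree, the tree path $\Pi_p(u,v)$ runs from $u$ up to $\ell=\mathrm{LCA}_{T_p}(u,v)$ and then down to $v$, so $w(\Pi_p(u,v))=d(u,p)+d(v,p)-2\,d(\ell,p)\le d(u,p)+d(v,p)$. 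Combining these,
\[
\myS(C_i')=w(e^*)+w(\Pi_p(u,v))\le w(e^*)+w(P_u)+w(P_v)=\myS(C_i).
\]
This step relies crucially on $p$ being chosen on the cycle $C_i$: had $p$ sat off the cycle, the detour $C_i\setminus\{e^*\}$ would not decompose through $p$ and the bound could fail.
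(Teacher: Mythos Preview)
The paper does not give its own proof here; it simply cites the result from prior work, with only the $O(n^2)$ count justified in the surrounding text. Your exchange argument is essentially the standard Horton-style proof from those references, and the overall strategy is correct.

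One step needs care. The size comparison $\myS(C_i')\le\myS(C_i)$ rests on the claim that removing $e^*$ from $C_i$ leaves a single $u$--$v$ path passing through $p$. This is only valid when $C_i$ is a \emph{simple} cycle; an arbitrary element of $\mathsf Z_1$ over $\mathbb Z_2$ is merely an edge set in which every vertex has even degree, so $C_i$ could be a disjoint union of circuits or a figure-eight, and then $C_i\setminus\{e^*\}$ need not be a path at all (let alone one through $p$). The repair is short: before running the exchange, replace any non-simple $C_i$ by one of its edge-disjoint simple components $D$ whose class $[D]$ remains independent of $\{[C_j]:j\neq i\}$; some component must have this property since $[C_i]=\sum_\ell[D_\ell]$. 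Non-negativity of the weights gives $\myS(D)\le\myS(C_i)$, and minimality of the original basis forces equality, so the new basis is still minimal. Since every cycle $C(p,e)\in\mathcal G$ is already simple (a tree path closed by one non-tree edge), this preprocessing never evicts an element of $H\cap\mathcal G$, and your termination count is unaffected. With that adjustment the proof goes through.
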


%compute basis
\subsection{Computing a minimal homology basis}\label{sec:h1b}
What remains is to compute a minimal homology basis from the candidate set $\mathcal{G}$. 
To achieve it, we modify the divide and conquer approach from \cite{DivideConquer} which improved the algorithm of \cite{depina} for computing a minimal cycle basis of a graph with non-negative weights. 

This approach uses an auxiliary set of support vectors \cite{DivideConquer} that helps select a minimal homology basis from a larger set containing at least one minimal basis; in our case, this larger set is $\mathcal{G}$. %The candidate set $\mathcal G$ in our case will be such a set.

A support vector $S$ is a vector in the space of $g$-dimensional binary vectors $\mathcal{S}=\{0, 1\}^g$. The use of support vectors along with annotations requires us to perform more operations  without increasing the complexity of the divide and conquer approach. Let $a(C)$ denote the annotation of a cycle $C$. First, we define the function: 
%demands some more calculations which we perform without increasing the complexity of the divide and conquer approach. Define a function 
$$m: \mathcal{S}\times \mathcal{G} \to \{0, 1\} \mbox{ with } m(S, C)=\langle S, a(C)\rangle \mbox{ where } \langle \cdot , \cdot \rangle \mbox{ is the inner product over } \mathbb{Z}_2.$$ We say a cycle $C$ is {\it orthogonal} to a support vector $S_i$ if $m(S_i, C)=0$ and is {\em non-orthogonal} if $m(S_i, C)=1$.
We would choose cycles $C_1$, $\cdots$, $C_g$ iteratively from a set guaranteed to contain a minimal homology basis and add them to the minimal homology basis. During the procedure, the algorithm always maintains a set of support vectors $S_1, S_2, \cdots, S_g$ with the following properties: 
\begin{itemize}
\item[(1).] $S_1, S_2, \cdots, S_g$ form a basis of $\{0, 1\}^g$. 
\item[(2).] If $C_1, C_2, \cdots, C_{i-1}$ have already been computed, $m(S_i, C_j)=0$, $j<i$. 
\end{itemize}

Suppose that in addition to properties (1) and (2), we have the following additional condition to choose $C_i$s, then the set $C_1,C_2,\ldots,C_g$ constitutes a minimal homology basis.

\begin{itemize}
	\item[(3).] If $C_1,C_2,\cdots, C_{i-1}$ have already been computed, $C_i$ is chosen so that
	$C_i$ is the shortest cycle with $m(S_i,C_i)=1$.
\end{itemize}

%Of course, property (3) contradicts property (2) if we keep the same support vectors. 
If we keep the same support vectors, after we select a new cycle $C_i$, $m(S_{i+1}, C_i)=0$ may not hold which means the property (2) may not hold.
Therefore, we update the support vectors $S_{i+1}, \cdots, S_g$ after computing $C_i$ so that the orthogonality condition (2) holds.  If chosen with condition (3), the cycle $C_i$ becomes independent of the cycles previously chosen as stated below: 

\begin{cl}\label{independent}
	For any $i\leq g$, if property (1) and (2) hold, then for any cycle $C$ with $m(S_i, C)=1$, $[C]$ is independent of $[C_1], [C_2], \cdots, [C_{i-1}]$. 
\end{cl}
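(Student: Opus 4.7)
The plan is to prove the contrapositive: I will show that if $[C]$ lies in the span of $[C_1], \ldots, [C_{i-1}]$, then $m(S_i, C) = 0$, which directly contradicts the hypothesis $m(S_i, C) = 1$.

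The key ingredient is that the annotation map descends to a well-defined linear map on $\mathsf{H}_1$. By Definition \ref{TimeAnnotation}, two cycles are homologous if and only if they share the same annotation, so $a$ induces an injective $\mathbb{Z}_2$-linear map $\bar{a} : \mathsf{H}_1 \to (\mathbb{Z}_2)^g$. Consequently, if $[C] = \sum_{j=1}^{i-1}\lambda_j[C_j]$ for some coefficients $\lambda_j \in \mathbb{Z}_2$, then applying $\bar{a}$ gives
\begin{equation*}
a(C) \;=\; \sum_{j=1}^{i-1}\lambda_j\, a(C_j).
\end{equation*}

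Taking the inner product with $S_i$ and using bilinearity of $\langle\cdot,\cdot\rangle$ over $\mathbb{Z}_2$, I obtain
\begin{equation*}
m(S_i, C) \;=\; \langle S_i, a(C)\rangle \;=\; \sum_{j=1}^{i-1}\lambda_j\, \langle S_i, a(C_j)\rangle \;=\; \sum_{j=1}^{i-1}\lambda_j \cdot m(S_i,C_j).
\end{equation*}
By property (2), each term $m(S_i,C_j) = 0$ for $j < i$, so the whole sum is $0$, contradicting $m(S_i,C) = 1$. Hence $[C]$ cannot lie in the span of $[C_1], \ldots, [C_{i-1}]$, which is exactly the statement of independence.

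The only subtlety, and the one step worth stating explicitly, is that passing from a homological relation $[C] = \sum \lambda_j [C_j]$ to the vector equation $a(C) = \sum \lambda_j a(C_j)$ requires that $a$ be constant on each homology class; this is precisely the ``only if'' half of the annotation property. The rest is a routine linearity argument, and property (1) (that the $S_i$'s form a basis) is not even needed for this claim, though it will matter for the overall correctness of the algorithm.
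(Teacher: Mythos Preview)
Your proof is correct and follows essentially the same contrapositive argument as the paper: assume $[C]$ depends on $[C_1],\ldots,[C_{i-1}]$, translate this to $a(C)=\sum_{j<i}\alpha_j a(C_j)$ via the annotation property, and conclude $m(S_i,C)=\sum_{j<i}\alpha_j\, m(S_i,C_j)=0$ from property~(2). Your observation that property~(1) is not actually used here is accurate and worth noting.
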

\begin{proof}
	By property (2), $\forall j< i, m(S_i, C_{j})=0$. If $[C]$ is not independent of  $[C_1], [C_2], \cdots, [C_{i-1}]$, then the annotation $a(C)$ of the cycle $C$ can be written as $a(C)=\sum_{j<i} \alpha_{j} a(C_{j})$,  where $\alpha_{j} \in \{0,1\}$ and at least one $\alpha_{j}=1, j<i$. Since $m(S_i, C_{i})=1$, we have $\sum_{j< i} \alpha_j m(S_i, C_{j})=1$. It follows that there exists at least one $C_{j}$, $j<i$, with $m(S_i, C_{j}) =1$, which contradicts with property (2). Therefore, $[C]$ is independent of  $[C_1], [C_2], \cdots, [C_{i-1}]$.
	\qed
\end{proof}
The following theorem guarantees that the above three conditions suffice for a minimal homology basis. Its proof is almost the same as the proof of~\cite[Theorem 1]{DivideConquer} (which draws upon the idea of \cite{depina}). %\TL{I omit the proof due to the page limit}%For completeness, we include the proof in Appendix \ref{appendix:main-thm}.
\begin{theorem} The set $\{C_1, C_2 , \cdots, C_g\}$ computed by maintaining properties (1), (2) and (3) is a minimal homology basis.
	\label{main-thm}
\end{theorem}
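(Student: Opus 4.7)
\noindent\emph{Proof plan.} The argument has two parts: showing that $\{C_1,\ldots,C_g\}$ is a homology basis, and showing that its total size is minimal. The first part follows immediately by induction on $i$ from Claim~\ref{independent}: at iteration $i$, property~(3) gives $m(S_i,C_i)=1$, and Claim~\ref{independent} then ensures $[C_i]$ is independent of $[C_1],\ldots,[C_{i-1}]$. Hence $[C_1],\ldots,[C_g]$ are $g$ linearly independent classes in the $g$-dimensional space $\mathsf{H}_1$ and therefore form a basis.

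For minimality I would proceed by induction on $i$ to establish the stronger invariant that \emph{for each $i\in\{0,1,\ldots,g\}$ there exists a minimal homology basis $\mathcal{B}^{(i)}\subseteq \mathcal{G}$ whose first $i$ elements are exactly $C_1,\ldots,C_i$}; the case $i=g$ gives the theorem. The base case $i=0$ is exactly Proposition~\ref{candidate}. For the inductive step, assume $\mathcal{B}^{(i-1)}=\{C_1,\ldots,C_{i-1},B_i,\ldots,B_g\}\subseteq \mathcal{G}$ is minimal. Since its annotations span $\mathbb{Z}_2^g$, I can uniquely expand
\begin{equation*}
a(C_i)=\sum_{j<i}\alpha_j\,a(C_j)+\sum_{k\geq i}\beta_k\,a(B_k).
\end{equation*}
Pairing with $S_i$ and using property~(2) (which kills the terms with $j<i$) together with property~(3) (which evaluates the left-hand side to $1$) collapses this to $1=\sum_{k\geq i}\beta_k\,m(S_i,B_k)$, so some $k_0\geq i$ must satisfy $\beta_{k_0}=1$ \emph{and} $m(S_i,B_{k_0})=1$. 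The first condition lets me carry out the matroid exchange $\mathcal{B}^{(i)}:=(\mathcal{B}^{(i-1)}\setminus\{B_{k_0}\})\cup\{C_i\}$, which is again a homology basis; the second, combined with $B_{k_0}\in\mathcal{G}$ and property~(3), yields $\mu(C_i)\leq \mu(B_{k_0})$. Therefore the total size of $\mathcal{B}^{(i)}$ does not exceed that of $\mathcal{B}^{(i-1)}$, and minimality of the latter forces minimality of the former, closing the induction.

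The delicate point, and what I expect to be the main obstacle, is securing a \emph{single} index $k_0$ that serves both roles at once: a nonzero expansion coefficient (so that the exchange preserves the basis property) and $m(S_i,B_{k_0})=1$ (so that $C_i$ is no longer than the element it replaces). This double duty is exactly what the invariants~(1)--(3) have been engineered to deliver; without the orthogonality requirement~(2) the identity $1=\sum_{k\geq i}\beta_k\,m(S_i,B_k)$ would pick up stray contributions from the $C_j$'s with $j<i$ and no such $k_0$ would be forced to exist.
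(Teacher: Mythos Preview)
Your argument is correct and is precisely the standard de Pina/Kavitha--Mehlhorn--Michail--Paluch exchange argument that the paper defers to (the paper gives no proof of its own here, only citing~\cite{DivideConquer,depina}). One small simplification: property~(3) as stated selects the shortest cycle among \emph{all} cycles, so the restriction $\mathcal{B}^{(i)}\subseteq\mathcal{G}$ and the appeal to Proposition~\ref{candidate} are unnecessary---you can start from an arbitrary minimal homology basis and obtain $\mu(C_i)\le\mu(B_{k_0})$ directly from $m(S_i,B_{k_0})=1$.
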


Taking advantage of the above theorem, we aim to compute a homology basis iteratively while maintaining  conditions (1), (2), and (3).
\subsubsection{Maintaining support vectors and computing shortest cycles.}
Now we describe the algorithm \Call{CycleBasis}{$\mathcal{G}$} (given in Algorithm \ref{basis}) that computes a minimal homology basis. In this algorithm, we first initialize each support vector $S_i$ so that only the $i$-th bit is set to 1. Then the main computation is done by calling the procedure \Call{ExtendBasis}{$1, g$}.  
\begin{algorithm}
	\caption{Computing a minimal Basis}\label{basis}
	\begin{algorithmic}[1] %??????
		\Procedure {CycleBasis}{$\mathcal{G}$}
		\For{$i\gets 1$ to $g$}
		\State Initialize $S_i\gets \{e_i\}$,  which means that the $i$-th bit of $S_{i}$ is 1 while others are 0
		\EndFor
		\State \Call{ExtendBasis}{$1, g$} to get a minimal homology basis $\{C_1, \cdots, C_g\}$
		\EndProcedure
	\end{algorithmic}
\end{algorithm}

Here the procedure \Call{ExtendBasis}{$i$, $k$} (Algorithm \ref{alg:extend}) is recursive which extends the current partial basis $\{C_1, \cdots, C_{i-1}\}$ by adding $k$ new cycles. It modifies a divide and conquer approach of \cite{DivideConquer} to maintain properties (1), (2), and (3). It calls a routine \Call{Update}{} to maintain orthogonality using annotations. For choosing the shortest cycle satisfying condition (3), it calls \Call{ShortestCycle}{$S_i$} in the base case ($k=1$)(See line 3 of Algorithm \ref{alg:extend}). We describe the recursion and the base case below.
\begin{algorithm}
	\caption{Extend the Basis by k elements} \label{alg:extend}
	\begin{algorithmic}[1] %??????
		\Procedure {ExtendBasis}{$i$, $k$}
		\If{$k=1$}
		\State Call \Call{ShortestCycle}{$S_{i}$} to compute the shortest cycle $C_{i}$ which is non-orthogonal to $S_{i}$
		\Else
		\State Call \Call{ExtendBasis}{$i, \lfloor k/2\rfloor$} to extend the homology basis by $\lfloor k/2\rfloor$ elements. After calling, $S_{i}, \text{...}, S_{i+\lfloor k/2 \rfloor-1}$ will be updated.
		\State Call \Call{Update}{$i$, $k$} to update the support vectors $\{S_{i+\lfloor k/2 \rfloor}, \text{...}, S_{i+k-1}\}$ using $\{S_{i}, \text{...}, S_{i+\lfloor k/2 \rfloor-1}\}$ and update the value $m(S_j, e)$ for $i+\lfloor k/2 \rfloor \leq j\leq i+k-1$ and every edge $e$.
		\State Call \Call{ExtendBasis}{$i+\lfloor k/2 \rfloor$, $\lceil k/2 \rceil$} to extend the cycle basis by $\lceil k/2 \rceil$ elements
		\EndIf
		\EndProcedure
	\end{algorithmic}
\end{algorithm}

\paragraph{Recursion.}
%\noindent \textbf{Recursion.~}
At the high level, the procedure \Call{ExtendBasis}{$i$, $k$} recurses on $k$ by first calling itself to obtain the next $\lfloor k/2 \rfloor$ cycles in the minimal homology basis in which the support vectors $S_{i}, S_{i+1}, \cdots, S_{i+\lfloor k/2 \rfloor-1}$ are updated. 
Then it calls the procedure \Call{Update}{$i$, $k$} to maintain the orthogonality property (2).  It uses the already updated support vectors $S_{i}, \cdots, S_{i+\lfloor k/2 \rfloor-1}$ to update $\{S_{i+\lfloor k/2 \rfloor}, \text{...}, S_{i+k-1}\}$ so that $m(S_l, C_j)=0, \forall j<i+\lfloor k/2 \rfloor, i+\lfloor k/2 \rfloor\leq l\leq i+k-1$. 
At last the procedure \Call{ExtendBasis}{$i$, $k$} calls itself \Call{ExtendBasis}{$i+\lfloor k/2 \rfloor$, $\lceil k/2 \rceil$} to extend the basis by $\lceil k/2 \rceil$ elements.

We describe \Call{Update}{$i$, $k$} and spare giving its pseudocode. Let $\{\hat S_{i+\lfloor k/2 \rfloor}, \text{...}, \hat S_{i+k-1}\}$ denote the desired output vectors after the update. To ensure the property (1) and (2), we will enforce that the vector $\hat S_{j}$ is of the form $\hat S_{j}=S_{j}+\sum_{t=1} ^{\lfloor k/2 \rfloor}\alpha_{jt}S_{i+t-1}$ where $i+\lfloor k/2 \rfloor\leq j\leq i+k-1$. We just need to determine the coefficients $\alpha_{j1}, \text{...}, \alpha_{j\lfloor k/2 \rfloor}$ so that $m( \hat S_{j}, C_{t} )=0$ where $i+\lfloor k/2 \rfloor\leq j\leq i+k-1$ and $i\leq t\leq i+\lfloor k/2 \rfloor-1$.
We will also compute $m(S_j, e)$ for $i+\lfloor k/2 \rfloor\leq j\leq i+k-1$ and every edge $e$ where $m(S_j, e)$ is defined as the standard inner product of $S_j$ and $a(e)$ under $\mathbb{Z}_2$, which is important later when we compute the shortest cycle orthogonal to a support vector $S$ in the procedure \Call{ShortestCycle}{$S$}. 

Now let 
\[
X=
\left( \begin{array}{c}
S_{i} \\
S_{i+1}\\
\vdots \\
S_{i+\lfloor k/2 \rfloor-1} \end{array} \right)
\cdot
\left(\begin{array}{cccc} 
a(C_{i})^T & a(C_{i+1})^T & \cdots & a(C_{i+\lfloor k/2 \rfloor-1})^T
\end{array} \right)\]
\[
Y=
\left( \begin{array}{c}
S_{i+\lfloor k/2 \rfloor} \\
S_{i+\lfloor k/2 \rfloor+1}\\
\vdots \\
S_{i+k-1} \end{array} \right)
\cdot
\left(\begin{array}{cccc} 
a(C_{i})^T & a(C_{i+1})^T & \cdots & a(C_{i+\lfloor k/2 \rfloor-1})^T
\end{array} \right) , \]
where recall that $g$-bit  vector $a(C)$ is the annotation of a cycle $C$. 
Let $A$ denote a $\lceil k/2 \rceil \times \lfloor k/2 \rfloor$ matrix where row $j$ contains the bit $\alpha_{j+i+\lfloor k/2 \rfloor-1,1}, \cdots, \alpha_{j+i+\lfloor k/2 \rfloor-1, \lfloor k/2 \rfloor}$. It is not difficult to see that $AX+Y=0$, and that $X$ is invertible, which means that $A=-YX^{-1}=YX^{-1}$ since the computations are under $\mathbb{Z}_2$.

The next step is to update the value $m(S_j, e)$ to $m(\hat S_j, e)$ for every edge $e$ in $\K$, and $i+\lfloor k/2 \rfloor\leq j\leq i+k-1$. Note that the coefficients $\alpha_{jt}$ are now known and the updated vectors are $\hat S_j=S_{j}+\sum_{t=1} ^{\lfloor k/2 \rfloor}\alpha_{jt}S_{i+t-1}$, $i+\lfloor k/2 \rfloor\leq j\leq i+k-1$. Thus for every edge $e$, $m(\hat S_j, e)=m(S_{j}+\sum_{t=1} ^{\lfloor k/2 \rfloor}\alpha_{jt}S_{i+t-1}, e)=m(S_{j}, e)+\sum_{t=1} ^{\lfloor k/2 \rfloor}\alpha_{jt}m(S_{i+t-1}, e)$, $i+\lfloor k/2 \rfloor\leq j\leq i+k-1$.
Let $n_1$ be the number of edges in $\K$ and $U$ be the $\lceil k/2 \rceil\times n_1$ matrix where its $(t, j)$ entry is  $m(\hat S_{i+\lfloor k/2 \rfloor+t-1}, e_j)$. Set $W=[A | I]$ where $I$ is the $\lceil k/2 \rceil \times \lceil k/2 \rceil$ identity matrix.
Let $Z$ be a $k\times n_1$ matrix whose $(s, t)$ entry is $m(S_{i+s-1}, e_t)$.
Thus we have $U=WZ$. Since the $\lceil k/2 \rceil\times k$ matrix $W$ and $k\times n_1$ matrix $Z$ are already known, the matrix $U$ can be computed in $O(nk^{\omega-1})$ time by chopping $Z$ to $n_1/k$ number of $k\times k$ submatrices and performing $O(n_1/k)$ matrix multiplications of two $O(k) \times O(k)$ size matrices. After that, $m(\hat S_j, e)$ can be easily retrieved from the matrix $U$ in constant time.  
%queried in constant time for each $e$. \yusu{?}

\paragraph{Base case for selecting a shortest cycle.}\label{selectcycles}
We now implement the procedure \Call{ShortestCycle}{$S_{i}$} for the base case to compute the shortest cycle $C_i$ non-orthogonal to $S_i$, i.e. the shortest cycle $C_i$ satisfying $m(S_i, C_i)=1$. 
%Instead of computing the annotation of each cycle $C(p, e)$ directly which increases the time complexity, w
We assign a label $l_p(u)$ to each vertex $u$ and $p$. Labeling has been used to solve many graph related problems previously~\cite{greedy, annotation, label}.
%for example, Erickson and Whittlesey~\cite{greedy}, Busaryev, et al.~\cite{annotation} and Mehlhorn and Michail~\cite{label} also used this technique in related problems.
 
%apply the label technique used in \cite{label} to determine whether a cycle $C$ is orthogonal to $S_i$ or not. 

Given a vertex $p$ and the shortest path tree $T_p$ rooted at $p$, let $\Pi_p(u)$ for any vertex $u\in vert(\K)$ denote the unique tree path in $T_p$ from $p$ to $u$, and  let $l_p(u)$ denote the value $m(S_i, \Pi_p(u))$. Let $w$ denote the parent of $u$ in tree $T_p$ and $e_{uw}$ denote the edge between $u$ and $w$. Then $l_p(u)=l_p(w) +m(S_i, e_{uw})$. Thus for a fixed $p \in vert(\K)$, we can traverse the tree $T_p$ from the root to the leaves and compute the label $l_p(u)$ for all vertices $u$ in $O(n)$ time as $m(S_i, e)$ for every edge is already precomputed earlier in the procedure \Call{Update}{} and can be queried in O(1) time. Thus the total time to compute labels $l_p(u)$ for all $p, u\in vert(\K)$ is $O(n^2)$. 

Now given a fixed vertex $p$ and the shortest path tree $T_p$, we consider every cycle $C(p,e)$, where $e=(u,v)$ is a non-tree edge. We partition the cycle into three parts: the tree path $\Pi_p(u)$, the tree path $\Pi_p(v)$ and the edge $e$. Thus $m(S_i, C(p,e))=m(S_i, \Pi_p(u))+m(S_i, \Pi_p(v)))+m(S_i, e)=l_p(u)+l_p(v)+m(S_i, e)$, which can be computed in $O(1)$ time as all labels are precomputed. Note that there are $O(n^2)$ cycles in the candidate set $\mathcal{G}$ to be computed. It results that in $O(n^2)$ total time, one can compute $m(S_i, C)$ for all cycles $C\in \mathcal{G}$ and find the smallest one.% in Line 13 of Algorithm \ref{sc}. 

%\begin{algorithm} 
%\caption{Compute the shortest cycle }\label{sc}
%\begin{algorithmic}[1]
%\Procedure{ShortestCycle}{$S_{i}$} 
%%\State Invariant: $S_i$ is a $g$-bit vector such that $m(x, C_j)=0$, $j=1, 2, \cdots, i-1$.
%%in the subspace orthogonal to ${a(C_1), a(C_2), \cdots, a(C_{i-1})}$, i.e. $S_i$ is a non-trivial solution to a set of linear equations: $m(x, C_j)=0$, $j=1, 2, \cdots, i-1$.
%	\State Compute $m(S_i, e_j), \forall e_j\in edge(\K)$
%	\ForAll{tree $T(p)$}
%	\State Compute label $l_{p}(u)\gets m(S_i, path(p, u))$ for very vertices $u\in vert(\K)$
%	\EndFor
%	\ForAll{cycle $C(p, e), e=(u, v)$ in the candidate set}
%		\If{$m(S_i, e)=1$}
%		\State $m(S_i, C(p, e)) \gets l_{p}(u)+l_{p}(v)+1$
%		\Else
%		\State $m(S_i, C(p, e)) \gets l_{p}(u)+l_{p}(v)$
%		\EndIf
%	\EndFor
%	\State Find $C(p, e)$ which is the shortest cycle in the candidate set such that $m(S_i, C(p, e)) =1$
%	\State \Return{$C(p, e)$}
%   	\EndProcedure
%\end{algorithmic}
%\end{algorithm}

\subsection{Correctness and time complexity}\label{sec:timeh1}
To prove the correctness of Algorithm \ref{basis}, it is crucial to guarantee that the support vectors $S_i$s and the cycles $C_i$s satisfy the desirable properties. First, the set of support vectors $\{S_1, S_2, \cdots, S_g\}$ is a basis of $\{0, 1\}^g$ because of the construction of $\hat S_{i}$s in the procedure \Call{Update}{}. The property that $\forall j<i$, $m(S_i, C_j)=0$ holds, because the procedure \Call{Update}{} ensures that $S_i$ is taken as a non-trivial solution to a set of linear equations $m(x, C_j)=0$, $1\leq j\leq i-1$, which always admits at least one solution. Similarly, for any $i\leq g$, there exists at least one cycle $C$ such that the equation $m(S_i, C)=1$ holds since both $S_1,\ldots, S_i$ and $C_1, \ldots, C_{i-1}$ at this point only form partial basis of a space with dimension $g$. In the base case, \Call{ShortestCycle}{} computes this cycle $C$ satisfying exactly this property. Then, Theorem~\ref{main-thm} ensures the correctness of the algorithm.

%For every phase $i$, we always add the shortest cycle $C_i$ to the basis satisfying $m(S_i, C_i)=1$, which means that the cycle $C_i$ is the shortest cycle where the corresponding homology class $[C_i]$ is independent of the linear combinations of $[C_1], [C_2], \cdots, [C_{i-1}]$. We iterate for all $i$ from 1 to $g$ and finally compute a minimal homology basis.
%All computation are over $\mathbb{Z}_2$. The procedure is presented in Algorithm \ref{highlevel}.
%\begin{algorithm}
%\caption{Computing a new basis element}\label{basecase}
%\begin{algorithmic}[1]
%	\Procedure{ShortestCycle}{$S_i$}
%	\State Invariant: $S_i$ is a $g$-bit vector in the subspace orthogonal to ${a(C_1), a(C_2), \cdots, a(C_{i-1})}$, i.e. $S_i$ is a non-trivial solution to a set of linear equations: $m(x, C_j)=0$, $j=1, 2, \cdots, i-1$.
%	\State Compute the shortest cycle $C$ such that $m(S_i, C)=1$.
%	\EndProcedure
%\end{algorithmic}
%\end{algorithm}

%correctness
%time complexity
%\subsection{Time complexity}\label{DivideTime}
The total running time of our algorithm is $O(n^2 g+n^{\omega})$ and the analysis is as follows. The time to annotate edges and construct the candidate set is $O(n^{\omega}+n^2\log n)=O(n^{\omega})$ from Proposition \ref{TimeAnnotation} and \ref{candidate}. When computing the basis, the time of the procedure \Call{CycleBasis}{} is dominated by the time of \Call{ExtendBasis}{}. For each $i\leq g$, the time complexity of \Call{ExtendBasis}{$i$,$k$} is bounded by the following recurrence:
	$$T(i, k)=
	\left\{ 
	\begin{matrix} \mbox{the time of }\Call{ShortestCycle}{S_{i}} & k=1 \\
	  2T(\cdot, k/2)+O(k^{\omega -1}n) & k>1  
	  \end{matrix} \right. $$
	  
Note that in the recursion, only the second parameter $k$ counts for the time complexity. %\yusu{check with my changes above and below on $T()$ (from $T(k)$ etc to $T(\cdot, k)$).}
Actually for each $i\leq g$, the time complexity of \Call{ShortestCycle}{$S_i$} in the base case is only $O(n^2)$ as we argued earlier, that is, $T(\cdot, 1)=O(n^2)$. Then the recurrence solves to $T(\cdot, k)=O(k(n^2)+k^{\omega-1}n)$. It follows that $T(1, g)=O(n^2g+g^{\omega-1}n)$. Combined with the time for computing annotations and constructing the candidate set, the time complexity is $O(n^2 g+n^{\omega})$.

%%%%%%%%%APPROXIMATION%%%%%%%%%%%
\section{An approximate minimal homology basis of $\mathsf{H}_1(\K)$}
\label{sec:approximation}

%In this section, we discuss the framework to compute a minimum basis, the divide-and-conquer technique, can also be used when computing an approximation of a minimum homology basis of $\mathsf{H}_1$ group. The bottleneck of computing the exact minimum homology basis is the procedure to compute the minimal  $1$-cycle $C$ such that $m(S_i, C)=1$. So one natural question is that whether the time complexity can be improved if we relax the constraint that the homology basis should be minimum and focus on an homology basis whose size is $t$-times the size of a minimum homology basis. Thus in this section, we discuss the situation when $t=2k-1$ where $k$ is a positive integer and a 2-approximate minimum homology basis. 

%\yusu{I noticed that notations like minimum homology basis, cycle basis etc are not properly defined before (which should be in either Section 2 or 3. }

In this section, we present an algorithm to compute an approximate minimal 1-dimensional homology basis, where the approximation is defined as follows. 
\begin{definition}[Approximate minimal homology basis]
Suppose $\mathcal{C}^*$ is a minimal homology basis for $\mathsf{H}_1(\K)$, and let $\ell^*_1 \le \ell^*_2\le \cdots \le \ell^*_g$ denote the sequence of sizes of cycles in $\mathcal{C^*}$ sorted in non-decreasing order. 
A set of $g$ cycles $\mathcal{C}'$ is a \emph{$c$-approximate minimal homology basis for $\mathsf{H}_1(\K)$} if (i) $\{ [C], C\in \mathcal{C}' \}$ form a basis for $\mathsf{H}_1(\K)$; and (ii) let $\ell_1, \ldots, \ell_g$ denote the sequence of sizes of cycles in $\mathcal{C}'$ in non-decreasing order, then for any $i\in [1,g]$, $\ell^*_i \le \ell_i \le c\cdot \ell^*_i$. 
\end{definition}

In what follows, we provide a 2-approximation algorithm running in $O(n^{\omega}\sqrt{n \log  n})$ time. 
At the high level, we first compute a set of candidate set $\mathcal{G}'$ of cycles that guarantees to contain a 2-approximate minimal homology basis. We then extract a 2-approximate basis from the candidate set $\mathcal{G}'$. 

First, we explain the construction of a candidate set of cycles. 
Recall that in Section \ref{sec:h1b}, we compute $O(n^2)$ candidate cycles, each of which has the form $C(p, e)$, formed by $e$ together with the two tree-paths from root $p$ to each of the endpoint of $e$ within the shortest path tree $T_p$.   
We now apply the algorithm by Kavitha et al. \cite{kavitha2007new} which can compute a smaller candidate set $\mathcal{G}'$ of $O(n \sqrt{n \log n})$ cycles which is guaranteed to contain a 2-approximate minimal \emph{cycle basis} (not homology basis) for graph $\K^{(1)}$ (i.e, 1-skeleton of the complex $\K$) in $O(n \sqrt{n}\log^{3/2} n)$ \emph{expected time}. 
Here, a cycle basis $\Gamma = \{\gamma_1, \ldots, \gamma_L \}$ of the graph $G = \K^{(1)}$ where $L=rank(\mathsf{Z}_1)$ is simply a set of cycles such that any other cycle from $G$ can be represented uniquely as a linear combination of cycles in ${\Gamma}$. 
A \emph{minimal cycle basis} is a cycle basis $\Gamma^*$ whose total weight $\sum_{\gamma \in \Gamma^*} \myS(\gamma)$ is smallest among all cycle basis. 
A cycle basis $\Gamma$ is a $c$-approximate minimal cycle basis if its total weight is at most $c$ times that of the minimal cycle basis, i.e, at most $c \cdot \sum_{\gamma \in \Gamma^*} \myS(\gamma)$.

Now let the size $\myS(\gamma)$ of a cycle be the total weight of all edges in $\gamma$. 
Then, it turns out that, $\mathcal{G}'$ not only contains a $2$-approximate minimal cycle basis w.r.t. this size, it also satisfies the following stronger property as proven in \cite{kavitha2007new}. 

\begin{proposition}[{\cite[Lemma 6.3]{kavitha2007new}}] \label{prop:2-app}
There exists a minimal cycle basis $\Gamma^* = \{\gamma^*_1, \ldots, \gamma^*_L\}$ such that, 
for any $1\leq i \leq L$, there is a subset $\Gamma_i \subseteq \mathcal{G'}$ of the computed candidate set $\mathcal{G}'$  so that (i) $\gamma^*_i = \sum_{\gamma \in \Gamma_{i}} \gamma$ and (ii) each cycle in $\Gamma_i$ has size at most $2\myS(\gamma^*_i)$.
\end{proposition}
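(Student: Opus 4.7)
The plan is to adapt the original argument from Kavitha, Mehlhorn, Michail and Paluch \cite{kavitha2007new}, where this result appears as their Lemma~6.3. First I would recall the construction of $\mathcal{G}'$: draw a uniformly random sample $R \subseteq \mathrm{vert}(\K)$ of size $\Theta(\sqrt{n \log n})$, build the shortest path tree $T_p$ for every $p \in R$, and take $\mathcal{G}' = \{C(p,e) : p \in R,\ e \in \mathrm{edge}(\K) \setminus E_p\}$. This yields $|\mathcal{G}'| = O(n \sqrt{n \log n})$, constructible within the stated expected time.

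Next I would fix a particular minimum cycle basis $\Gamma^* = \{\gamma_1^*, \ldots, \gamma_L^*\}$ via Horton's structural theorem, which guarantees that some minimum cycle basis lies entirely inside the Horton family $\{C(v,e) : v \in \mathrm{vert}(\K),\ e \in \mathrm{edge}(\K) \setminus E_v\}$. Thus each $\gamma_i^* = C(v_i, e_i)$ for some root $v_i$ and non-tree edge $e_i = (u_i, w_i)$. When $v_i \in R$, the cycle $\gamma_i^*$ already lies in $\mathcal{G}'$, so setting $\Gamma_i = \{\gamma_i^*\}$ makes conditions (i) and (ii) trivial.

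The substantive case is $v_i \notin R$. Here I would invoke a path-swapping decomposition along $\Pi_{v_i}(u_i, w_i)$. With the sample size above, a standard hitting-set calculation shows that, except with negligible probability, some $p \in R$ lies on this tree path whenever the path is long enough; short tree paths yield cycles whose size is already bounded against a nearby member of $\mathcal{G}'$ and can be handled separately. Given such a $p$, I would write $\gamma_i^*$ as the $\mathbb{Z}_2$-sum of $C(p, e_i)$ together with a residual cycle that sits strictly closer to $v_i$ inside $T_{v_i}$. The residual admits its own Horton-style presentation, so the swap can be iterated until every summand lies in $\mathcal{G}'$, producing $\Gamma_i \subseteq \mathcal{G}'$ with $\gamma_i^* = \sum_{\gamma \in \Gamma_i} \gamma$.

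The main obstacle is establishing the size bound (ii) pointwise across the decomposition, not merely in aggregate. Using that $p$ lies on the tree path from $v_i$ to an endpoint of $e_i$, one shows via the triangle inequality applied inside $T_p$ that $\mu(C(p, e_i)) \leq 2 \mu(\gamma_i^*)$, and the same estimate propagates to each residual cycle since every iteration only shortens the effective root-to-endpoint distances. Combining this deterministic size estimate with the random-sampling guarantee yields the proposition as stated.
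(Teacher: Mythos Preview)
The paper does not give its own proof of this proposition: it is imported directly as Lemma~6.3 of Kavitha, Mehlhorn, Michail and Paluch~\cite{kavitha2007new} and used as a black box (see the sentence immediately preceding the statement). There is therefore no in-paper argument to compare your attempt against; what you have written is a reconstruction of the proof from the cited source, which is a different task.

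As a reconstruction, your outline has the right overall shape---random root sampling, a Horton representation of a minimum cycle basis, a hitting-set bound, and a swap-and-recurse decomposition---but it leaves a genuine gap precisely at the step you flag as delicate. The sentence ``short tree paths yield cycles whose size is already bounded against a nearby member of $\mathcal{G}'$ and can be handled separately'' is where the argument actually bites: a basis cycle $\gamma_i^*$ with few vertices need not be hit by $R$, and nothing you have written explains how such a cycle is expressed as a $\mathbb{Z}_2$-sum of members of $\mathcal{G}'$, each of size at most $2\mu(\gamma_i^*)$. The subsequent claim that the residual after one swap ``admits its own Horton-style presentation'' and that the factor-$2$ bound ``propagates'' through the recursion likewise needs an explicit invariant to drive the induction. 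If your goal is a self-contained proof, both of these points must be filled in; if not, citing~\cite{kavitha2007new} as the present paper does is the appropriate course.
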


%\begin{prop}\label{2-basis}
%The set $\mathcal{G'}$ contains $L$ linearly independent cycles $A_1, \cdots , A_L$ with $w(A_i ) \leq 2w(B_i )$,  for $i = 1, \cdots , L$.
%\end{prop}
%Now, let $\{C^{*}_1, C^{*}_2, \cdots, C^{*}_g\}$ denote a \emph{minimum homology basis}. 
Next, we prove that a candidate set $\mathcal{G'}$ satisfying conditions in Proposition \ref{prop:2-app} is guaranteed to also contain a 2-approximate minimal homology basis. We remark that if Proposition \ref{prop:2-app} does not hold, then the sole condition that $\mathcal{G}'$ contains a $c$-approximate minimal \emph{cycle basis} is \textbf{not} sufficient to guarantee that it also contains a $c$-approximate minimal \emph{homology basis} for any constant $c$. A counter-example is given at the end of this section. 

\begin{lemma}\label{lem:apprbasis}
Given a set $\mathcal{G'}$ of cycles satisfying Proposition \ref{prop:2-app}, there exists a minimal homology basis $\mathcal{C}^* = \{ C^*_1, \ldots, C^*_g \}$ such that $\mathcal{G}'$ contains $g$ cycles $A_1, \cdots , A_g$ with (i). $[A_1], \cdots, [A_g]$ form a homology basis, and (ii) $\myS(A_i ) \leq 2\myS(C^{*}_i )$,  for $i = 1, \cdots , g$.
\end{lemma}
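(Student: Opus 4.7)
The strategy is to first identify the minimal homology basis $\mathcal{C}^*$ as the greedy homology selection from the minimal cycle basis $\Gamma^* = \{\gamma_1^*, \ldots, \gamma_L^*\}$ of Proposition~\ref{prop:2-app} (sorted so that $\myS(\gamma_1^*) \leq \cdots \leq \myS(\gamma_L^*)$), and then to use the explicit decompositions $\gamma_j^* = \sum_{\gamma \in \Gamma_j} \gamma$ to read off the cycles $A_1, \ldots, A_g$ from $\mathcal{G}'$. More precisely, I define $C_k^* := \gamma_{j_k}^*$, where $j_k$ is the smallest index such that $[\gamma_{j_k}^*]$ is independent of $[C_1^*], \ldots, [C_{k-1}^*]$ in $\mathsf{H}_1$; this gives $g$ cycles with sorted weights $\myS(C_1^*) \leq \cdots \leq \myS(C_g^*)$, and Proposition~\ref{prop:2-app} hands me an explicit $\Gamma_{j_k} \subseteq \mathcal{G}'$ for each one whose elements are each of size at most $2\myS(C_k^*)$.

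The first substantive step is to verify that this $\mathcal{C}^*$ is actually a minimum-weight homology basis. Suppose not; then there is a minimum homology basis $D_1, \ldots, D_g$ (also sorted) with $\myS(D_i) < \myS(C_i^*)$ at the smallest violating index $i$. Expressing each $D_j = \sum_{k \in J_j} \gamma_k^*$ in the basis $\Gamma^*$, the minimum cycle basis exchange property (for any $k \in J_j$, swapping $\gamma_k^*$ for $D_j$ in $\Gamma^*$ still yields a cycle basis, whose total weight must be no less than that of $\Gamma^*$) gives $\myS(\gamma_k^*) \leq \myS(D_j) < \myS(C_i^*)$ for every $j \leq i$ and every $k \in J_j$. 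Hence every $\gamma_k^*$ appearing in $J_1 \cup \cdots \cup J_i$ sits strictly before $j_i$ in the sorted order, so by the greedy construction of the $C^*$'s its homology class lies in $V := \mathrm{span}\{[C_1^*], \ldots, [C_{i-1}^*]\}$. But then $[D_1], \ldots, [D_i]$ all lie in $V$, a space of dimension only $i-1$, contradicting their $\mathsf{H}_1$-independence. Hence $\myS(C_i^*) \leq \myS(D_i)$ for all $i$, so $\mathcal{C}^*$ is a minimal homology basis.

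With $\mathcal{C}^*$ in hand, I choose $A_1, \ldots, A_g$ inductively while maintaining that $[A_1], \ldots, [A_i]$ are linearly independent in $\mathsf{H}_1$ and that each $A_k$ lies in $\bigcup_{r \leq k} \Gamma_{j_r}$. For the inductive step, let $U_{i-1} := \mathrm{span}\{[A_1], \ldots, [A_{i-1}]\}$, a subspace of dimension $i-1$. Since $[C_k^*] = \sum_{\gamma \in \Gamma_{j_k}} [\gamma]$ lies in $W_i := \mathrm{span}\{[\gamma] : \gamma \in \bigcup_{k \leq i} \Gamma_{j_k}\}$ for every $k \leq i$, and since $[C_1^*], \ldots, [C_i^*]$ span an $i$-dimensional subspace, we get $\dim W_i \geq i > \dim U_{i-1}$. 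Hence some $\gamma \in \bigcup_{k \leq i} \Gamma_{j_k}$ has $[\gamma] \notin U_{i-1}$; setting $A_i := \gamma$ preserves the invariants, and if $A_i \in \Gamma_{j_{k^*}}$ for some $k^* \leq i$, Proposition~\ref{prop:2-app} gives $\myS(A_i) \leq 2\myS(\gamma_{j_{k^*}}^*) = 2\myS(C_{k^*}^*) \leq 2\myS(C_i^*)$, which is the required bound. The main obstacle I anticipate is the first step: establishing that greedy selection on a minimum cycle basis gives a minimum homology basis. The essential ingredient is the exchange property of minimum cycle bases, which bounds the weight of any cycle below by the largest weight among the basis elements in its support; once this is in place, the inductive construction of the $A_i$'s reduces to a routine dimension count.
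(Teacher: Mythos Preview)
Your proposal is correct and follows essentially the same approach as the paper: both arguments fix a minimal homology basis $\mathcal{C}^*$ sitting inside the minimal cycle basis $\Gamma^*$ of Proposition~\ref{prop:2-app}, and then run the same inductive dimension count on $\bigcup_{r\le k}\Gamma_{j_r}$ to extract the $A_i$'s. The only substantive difference is that the paper simply asserts as known that $\Gamma^*$ contains a minimal homology basis, whereas you spell this out explicitly via greedy selection and the exchange property of minimum cycle bases; your version is thus more self-contained, but otherwise the two proofs coincide.
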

\begin{proof}
Let $\Gamma^*$ be a minimal homology basis which satisfies Proposition \ref{prop:2-app}. It is known that it contains a minimal homology basis, which we set as $\mathcal{C}^* = \{ C^*_1, \ldots, C^*_g \}$. 
Now by Proposition \ref{prop:2-app}, for each $C^*_i$, there exists a subset $\Gamma_i \subseteq \mathcal{G'}$ such that $C^*_i = \sum_{\gamma \in \Gamma_i} \gamma$ and $\myS(\gamma) \le 2 \myS(C^*_i)$, $\forall \gamma \in \Gamma_i$. 
Assume w.l.o.g. that cycles in $\mathcal{C}^*$ are in non-decreasing order of their sizes. 
We now prove the lemma inductively. In particular, \\

{\sf Claim-A:} For any $k$, we show that there exists $A_1, \ldots, A_k\in \bigcup_{r \le k} \Gamma_r$ such that for each $i\in [1,k]$, (Cond-1) $\myS(A_i) \le 2 \myS(C^*_i)$; and (Cond-2) $[A_1], \ldots, [A_k]$ are independent.\\

The base case is straightforward: We can simply take $A_1$ as any cycle from $\Gamma_1$ that is not null-homologous (which must exist as $C^*_1 = \sum_{\gamma \in \Gamma_1} \gamma$ is not null-homologous). 

Now suppose the claim holds for $k$. Consider the case for $k+1$. 
By induction hypothesis, there exists $A_1, \ldots A_k\in \bigcup_{r \le k} \Gamma_r$ such that (Cond-1) and (Cond-2) hold. 
Now consider cycles in $\bigcup_{r \le k+1} \Gamma_r$. 
%If any cycle $\gamma \in \bigcup_{r \le k+1} \Gamma_r$ represents a homology class $[\gamma]$ independent of $[A_1], \ldots, [A_k]$, then simply set $A_{k+1} = \gamma$, and easy to see that both (Cond-1) and (Cond-2) holds for $A_{k+1}$. 
Let $\mathcal{H}_{k+1}$ denote the subgroup of $\mathsf{H}_1(\K)$ generated by the homology classes of all cycles in $\bigcup_{r \le k+1} \Gamma_r$. Note that $\mathcal{H}_{k+1}$ spans $\{[C^*_1], \ldots, [C^*_{k+1}]\}$, then the rank of $\mathcal{H}_{k+1}$ is at least $k+1$, which means there always exists a cycle $A_{k+1}\in \bigcup_{r \le k+1} \Gamma_r$ such that $[A_{k+1}]$ is independent of $[A_1], \ldots [A_k]$. By definition of $\bigcup_{r \le k+1} \Gamma_r$, there is an index $j\leq k+1$ such that $\myS(A_{k+1})\leq \myS(C^*_j)\leq \myS(C^*_{k+1})$ which satisfies both (Cond-1) and (Cond-2). Thus {\sf Claim-A} holds for $k+1$ as well.

%If such a $\gamma$ cannot be found, that means that for any $\gamma\in \bigcup_{r \le k+1} \Gamma_r$, $[\gamma] \in Span(\{ [A_1], \ldots, [A_k]\}$, which further implies that $C^*_{k+1}$ also satisfies that 
%\begin{align}
%[C^*_{k+1}] &= \sum_{i \le k} c_i [A_i] ~~ \text{where each coefficient~} c_i \in \{0, 1\}~\text{and at least one coefficient is}~1. 
%\label{eqn:Ck}
%\end{align}
%%$c_i \in \{0, 1\}$ and at least one coefficient is 1. 
%Let $\mathcal{H}_k$ denote the subgroup of $\mathsf{H}_1(\K)$ generated by the homology classes of all cycles in $\bigcup_{r \le k} \Gamma(C^*_r)$. Eqn (\ref{eqn:Ck}) implies that $[C^*_{k+1}] \in \mathcal{H}_k$. 
%
%On the other hand, the induction hypothesis implies that $\{[C^*_1], \ldots, [C^*_k]\} \subseteq \mathcal{H}_k$. 
%It means that there exists a cycle $\gamma$ which can be written as linear combination of cycles in $\bigcup_{r \le i} \Gamma(C^*_r)$ such that $[\gamma]$ is independent to $[A_1], \ldots, [A_k]$. 
%Assume that $\gamma = \gamma_{j_0} + \cdots + \gamma_{j_s}$ with each $\gamma_{j_i} \in \bigcup_{r \le i} \Gamma(C^*_r)$, meaning that $\myS(\gamma_{j_i}) \le 2 \myS(C^*_k)$ for any $i \in [1,s]$. 
%At least one $\gamma_{j_i}$ statisfies that $[\gamma_{j_i}]$ is independent of $\{[A_1], \ldots, [A_k]\}$ (as other wise, $[\gamma]$ has to be linearly dependent to $\{[A_1], \ldots, [A_k]\}$ as well). 
%We set $A_{k+1}$ to be this cycle $\gamma_{j_i}$ and both (Cond-1) and (Cond-2) hold. Thus {\sf Claim-A} holds for $k+1$ as well. 
The lemma then follows when $k=g$. \qed

%Given the optimal homology basis $\{ C^*_i, i\in [1,g] \}$, there always exists a minimal cycle basis, called it $\Gamma^*$, such that $\{ C^*_i, i\in [1, g]\} \subseteq \Gamma^*$ . 
%Thus for each $1\leq i\leq g$, we have $C^{*}_i =\sum_{C\in \mathcal{C}_i}C$ where $\mathcal{C}_i$ is a subset of $\mathcal{G'}$ and each cycle in $\mathcal{C}_i$ has size at most $2S(C^{*}_i)$. Let $\mathcal{H}_i$ denote the set of homology classes of cycles in $\mathcal{C}_i$, i.e. $\mathcal{H}_i=\cup_{C\in \mathcal{C}_i}[C]$. Thus for every $i$, the set $\cup_{r\leq i}\mathcal{H}_r$ spans $[C^{*}_1], [C^{*}_2], \cdots, [C^{*}_i]$. Suppose $j$ is the smallest number such that we cannot find an independent set $[A_1], \cdots, [A_j]$ with $w(A_i)\leq 2w(B_i)$, $1\leq i\leq j$. Then the set $\cup_{r\leq j-1}\mathcal{H}_r$ contains an independent set $[A_1], \cdots, [A_{j-1}]$ with $w(A_i)\leq 2w(B_i)$, $1\leq i\leq j-1$. However, since $\cup_{r\leq j}\mathcal{H}_r$ spans $[C^{*}_1], [C^{*}_2], \cdots, [C^{*}_j]$, it certainly has another cycle $A_j$ such that $[A_j]$ is independent with $[A_1], \cdots, [A_{j-1}]$. Thus $A_j\in \mathcal{C}_i$ with some $a\leq j$. Then we have $w(A_{j})\leq2w(C^{*}_a)\leq2w(C^{*}_j)$, a contradiction.
\end{proof}

So far we have proved that the new candidate set $\mathcal{G'}$ always contains a 2-approximate minimal homology basis. What remains is to describe how to compute such an approximate basis from the candidate set $\mathcal{G}'$. 
First, as in Algorithm \ref{alg}, we compute the annotation of all edges in $O(n^{\omega})$ time. Let $a(e)$ denote the annotation of an edge $e \in \K^{(1)}$ in the complex $\mathcal{K}$; recall that $a(e)$ is a $g$-bit vector with $g = rank(\mathsf{H}_1(\K))$.
Also recall that given a cycle $\gamma$, its annotation $a(\gamma) = \sum_{e\in \gamma} a(e)$ represents the homology class of this cycle, and two cycles are homologous if and only if they have the same annotation vectors. 

Now order the cycles in $\mathcal{G}' = \{\gamma_1, \ldots, \gamma_m\}$, where $m = |\mathcal{G}'| = O(n\sqrt {n \log n})$, in non-decreasing order of their sizes. 
We will compute the annotation of all cycles in $\mathcal{G}'$ and put them in the $g \times m$ matrix $M$, whose $i$-th column $M[i]$ represents the annotation vector for the cycle $\gamma_i$. 
Since $\mathcal{G}'$ contains a homology basis of $\mathsf{H}_1(\K)$ ( Lemma \ref{lem:apprbasis}), $rank(M) = g$. 

First, we explain how to compute annotation matrix $M$ efficiently. 
Let $edge(\K) = \{e_1, \ldots, e_L\}$ denote all edges from $\K$. 
Let $A$ denote the $L \times m$ matrix where $\gamma_i = \sum_{j\in [1,L]} A[i][j] e_j$; that is, non-zero entries of the $i$-th column $A[i]$ encode all edges in the cycle $\gamma_i$. 
Let $B$ denote the $g \times L$ matrix where the $i$-th column $B[i]$ encodes the annotation of edge $e_i$. 
It is easy to see that $M = A^T \cdot B^T$. 
Instead of computing the multiplication directly, we partition the matrix $A^T$ top-down into $m/L$ submatrices each of size at most $L\times L$. For each of this submatrix, its multiplication with $B^T$ can be done in $O(L^{\omega})$ matrix multiplication time. Thus the total time to compute the multiplication $M=A^T \cdot B^T$ takes $O(\frac{m}{L} L^{\omega}) = O(m n^{\omega-1})$ time as $L \le n$.  
In other words, we can compute the annotation matrix $M$ in $O(n^\omega \sqrt{n \log n})$ as $m = O(n \sqrt{n\log n})$.

We now compute a 2-approximate minimal homology basis from $\mathcal{G}'$. Here we use so-called \emph{earliest basis}. 
Specifically, in general, given a matrix $D$ with rank $r$, the set of column vectors $\{D[i_1], \cdots , D[{i_r}] \}$ is called an \textbf{earliest basis} for the vector space spanned by all columns in $D$ (or simply, for $D$), if the column indices $\{i_1,\ldots , i_r\}$ are the lexicographically smallest index set such that the corresponding columns of $D$ have full rank.

%\yusu{Why are there two Proposition 4.1? It seems that you have two different proposition environments mixed in the tex file. Please use only one.}
\begin{proposition}[\cite{annotation}]\label{prop:annotation}
\textit{Let $D$ be an $m\times n$ matrix of rank $r$ with entries over $\mathbb{Z}_2$ where $n \leq m$, then there is an $O(m n^{\omega-1})$ time algorithm to compute the earliest basis of $D$.}
\end{proposition}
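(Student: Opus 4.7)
The earliest basis of $D$ coincides with the set of pivot columns produced by Gaussian elimination that processes the columns from left to right: column $i$ enters the basis precisely when it is linearly independent of all columns to its left. A naive implementation of this processing costs $O(mnr) = O(mn^2)$ time, so the plan is to replace naive elimination with a block-recursive routine modeled on the Bunch--Hopcroft fast $LU$ decomposition, which achieves $O(mn^{\omega-1})$ by reducing the bulk of the work to rectangular matrix multiplications.

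At the high level, I would split $D = [D_L \mid D_R]$, with $D_L$ holding the first $\lceil n/2 \rceil$ columns and $D_R$ the remaining ones. First, I recursively compute the earliest basis of $D_L$; besides the pivot index set $I_L \subseteq \{1, \ldots, \lceil n/2 \rceil\}$, the recursive call also returns a compact representation of the row operations that reduce $D_L$ so that its pivot columns become standard basis vectors. Letting $k = |I_L|$, this representation is an $(m-k) \times k$ matrix $V$ whose rows record how each non-pivot row is expressed as a combination of the pivot rows during elimination. Next, I apply the same row operations to $D_R$ by forming $V \cdot (D_R)_{\mathrm{pivot}}$, where $(D_R)_{\mathrm{pivot}}$ denotes the $k \times \lceil n/2 \rceil$ submatrix of $D_R$ on the pivot rows, and subtracting this product from the non-pivot rows of $D_R$. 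This yields a matrix $D_R'$ of size $(m-k) \times \lceil n/2 \rceil$ whose columns represent each column of $D_R$ modulo $\mathrm{span}(D_L)$. Finally, I recursively compute the earliest basis of $D_R'$; shifting its indices by $\lceil n/2 \rceil$ and adjoining them to $I_L$ gives the earliest basis of $D$.

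For the time complexity, the product $V \cdot (D_R)_{\mathrm{pivot}}$ can be carried out in $O(mn^{\omega-1})$ time by partitioning the wider dimension into square blocks and invoking $\omega$-time matrix multiplication on each block. The recurrence is therefore $T(m,n) \le 2T(m, n/2) + O(mn^{\omega-1})$, which solves to $T(m,n) = O(mn^{\omega-1})$ because $\omega > 2$ makes the work at level $i$ shrink by a factor $2^{\omega-2} > 1$ per level, so the root level dominates geometrically. The main obstacle, in my view, is keeping the row-operation data in a compact form: storing a full $m \times m$ transformation would blow up the cost, so one must show that carrying only the $(m-k) \times k$ matrix $V$ together with the pivot row indices is enough both to apply the reduction within the claimed budget and to preserve the correctness invariant --- namely, that a column of $D_R$ is independent of the columns indexed by $I_L$ and by previously chosen columns of $D_R$ exactly when the corresponding column of $D_R'$ is independent in the restricted subproblem. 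Once this invariant is set up, correctness follows by a straightforward induction on $n$, and the lexicographic minimality of the returned set comes from processing $D_L$ before $D_R$.
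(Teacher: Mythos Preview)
The paper does not prove this proposition at all: it is stated with a citation to~\cite{annotation} (Busaryev et al.) and used as a black box. There is therefore no ``paper's own proof'' to compare your attempt against.

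That said, your sketch is a reasonable route to the result and is close in spirit to the fast block-elimination algorithms of Bunch--Hopcroft / Ibarra--Moran--Hui type on which the cited result ultimately rests. The identification of the earliest basis with the pivot columns of left-to-right Gaussian elimination is correct, the recurrence $T(m,n)\le 2T(m,n/2)+O(mn^{\omega-1})$ is set up and solved correctly, and you rightly flag the compact representation of the accumulated row operations as the delicate point. The one place where your outline is thin is precisely there: when you recurse on $D_L$ itself, the two inner calls each return their own $V$-matrix, and to hand a single $(m-k)\times k$ matrix $V$ back to the parent you must \emph{combine} them. That combination costs an additional rectangular matrix product (roughly $V_{LR}$ times a restriction of $V_{LL}$), and you would need to check that this extra product still fits inside the $O(mn^{\omega-1})$ budget at each level. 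It does, but the argument is not entirely trivial and deserves at least a sentence; without it the recursion as written does not quite close.
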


Let $\{i_1, \ldots, i_g\}$ be the indices of columns in the earliest basis of $M$. 
This can be done in $O(m g^{\omega-1}) = O(n \sqrt{n\log n}\cdot g^{\omega-1})$ time by the above proposition as $m=O(n\sqrt {n \log n})$.  
The cycles corresponding to these columns form a homology basis by the properties of annotations \cite{annotation}. 

Finally, we note that the earliest basis of $M$ has the smallest (lexicographically) sequence of size sequence. Hence its total size is at most the size of the 2-approximate minimal homology basis $A_1, \ldots, A_g$ as specified in Lemma \ref{lem:apprbasis}. 
Hence putting everything together, we conclude with the following theorem. 

\begin{theorem}
The algorithm above computes a 2-approximate minimal homology basis of the 1-dimensional homology group $\mathsf{H}_1(\K)$ of a simplicial complex with non-negative weights in $O(n^{\omega}\sqrt{n \log  n})$ expected time.
\end{theorem}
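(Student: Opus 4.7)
The plan is to prove the theorem in two parts: correctness (the output is indeed a 2-approximate minimal homology basis) and running time (the expected running time is $O(n^\omega \sqrt{n \log n})$). Both parts mostly amount to assembling results already established earlier in the section, so the proof will be short.

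For correctness, I would first invoke Lemma~\ref{lem:apprbasis}: the candidate set $\mathcal{G}'$ produced by the Kavitha et al.\ algorithm contains cycles $A_1,\ldots,A_g$ whose homology classes form a basis of $\mathsf{H}_1(\K)$ and whose sizes satisfy $\myS(A_i) \le 2\myS(C^*_i)$ for some minimal homology basis $\{C^*_1,\ldots,C^*_g\}$ (both listed in non-decreasing order of sizes). Next I would argue that the earliest basis extraction from the annotation matrix $M$ does at least as well. Precisely, let $\{\gamma_{i_1},\ldots,\gamma_{i_g}\}$ be the cycles indexed by the earliest basis columns of $M$, ordered so that $\myS(\gamma_{i_1})\le\cdots\le\myS(\gamma_{i_g})$; since $\mathcal{G}'$ was sorted by size and the earliest basis is lexicographically smallest, a swap argument shows $\myS(\gamma_{i_j}) \le \myS(A_j)$ for every $j$. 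Combined with Lemma~\ref{lem:apprbasis}, this yields $\myS(\gamma_{i_j}) \le 2\myS(C^*_j)$, which is exactly the 2-approximation condition. Independence of the corresponding homology classes follows from the defining property of annotations: the columns of $M$ are the annotation vectors, which are linearly independent over $\mathbb{Z}_2$ iff the corresponding homology classes are independent.

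For the running time, I would tally the four stages in order: (i) computing annotations for all edges takes $O(n^\omega)$ by Proposition~\ref{TimeAnnotation}; (ii) computing the candidate set $\mathcal{G}'$ of $m = O(n\sqrt{n \log n})$ cycles takes $O(n\sqrt n \log^{3/2} n)$ expected time by the Kavitha et al.\ algorithm; (iii) building the $g\times m$ annotation matrix $M = A^T B^T$ by blocking $A^T$ into $m/L$ submatrices of size $L\times L$ and using fast matrix multiplication takes $O((m/L)L^\omega) = O(m\, n^{\omega-1}) = O(n^\omega \sqrt{n \log n})$; (iv) computing the earliest basis of $M$ takes $O(m\, g^{\omega-1}) = O(n\sqrt{n\log n}\cdot g^{\omega-1})$ by Proposition~\ref{prop:annotation}. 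Since $g\le n$, the third stage dominates, giving the claimed bound $O(n^\omega \sqrt{n\log n})$.

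The only conceptually subtle step is the swap argument comparing the earliest basis of $M$ against the witness basis $A_1,\ldots,A_g$ from Lemma~\ref{lem:apprbasis}; the rest is bookkeeping. If the earliest basis indices are $i_1 < \cdots < i_g$ and the columns of $A_1,\ldots,A_g$ appear at positions $j_1<\cdots<j_g$ in $\mathcal{G}'$, the lexicographic minimality of $\{i_1,\ldots,i_g\}$ implies $i_\ell \le j_\ell$ for every $\ell$, and since $\mathcal{G}'$ is size-sorted this gives $\myS(\gamma_{i_\ell}) \le \myS(\gamma_{j_\ell}) = \myS(A_\ell)$. This is the main obstacle in that it is the only place where the particular choice of \emph{earliest} basis (rather than an arbitrary basis) is essential; everywhere else the argument is purely about rank, annotations, and running time tallies.
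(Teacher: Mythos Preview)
Your proposal is correct and follows essentially the same approach as the paper: invoke Lemma~\ref{lem:apprbasis} for the existence of a 2-approximate basis inside $\mathcal{G}'$, build the annotation matrix $M$ by blocked multiplication, extract the earliest basis via Proposition~\ref{prop:annotation}, and tally the same four running-time contributions. If anything, your treatment of correctness is slightly more careful than the paper's---you spell out the matroid exchange fact $i_\ell \le j_\ell$ to get the required element-wise size bound, whereas the paper only asserts that the earliest basis has the lexicographically smallest size sequence.
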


\subsubsection{Remark.}
Since an approximate minimal homology basis still forms a basis for $\mathsf{H}_1(\K)$, it means that computing it is at least as hard as computing the rank of $\mathsf{H}_1(\K)$. Currently the best algorithm for the rank computation for general simplicial complex $\K$ is $O(n^\omega)$ (the matrix multiplication time). 
Hence the best we can expect for computing an approximate minimal homology basis is perhaps $O(n^\omega)$ (versus the $O(n^2 g+ n^\omega)$ time complexity of the exact algorithm from Section \ref{sec:h1b}). 
We remark that we can also develop an algorithm that computes a $(2k-1)$-approximate minimal homology basis in time $O(kn^{1+1/k} g ~\mathrm{polylog}~ n+ n^\omega)$, where $k\ge 1$ is an integer -- indeed, as the approximation factor reaches $\log n$, the time complexity becomes $O(n^\omega)$ (which is the best time known for rank computation). The framework of this algorithm follows closely from an approach by Kavitha et al. in \cite{kavitha2007new}, and we thus omit the details here. 

\subsubsection{A counter-example.}
Figure~\ref{exampleMCB} gives an example which shows that, without Proposition~\ref{prop:2-app}, it is not guaranteed that a candidate set containing a $c$-approximate minimal cycle basis includes a $2$-approximate minimal homology basis. 
Let the size of a 1-cycle in $\K$ shown in the figure to be the sum of all edges in the cycle. There is only one minimal cycle basis in this figure, namely $C_1, C_2, C_3$ and $C_4$, as shown in Figure~\ref{exampleMCBMCB}. 
The minimal homology basis of $\K$ should be $\{C_1, C_2, C_3\}$. 
However, consider the candidate set $\mathcal{G}$ which contains 4 cycles as shown in Figure~\ref{exampleMCBAppro}: $C_2, C_3, C_4$ and $C'_4=C_1+C_2+C_3$. It is easy to check that these 4 cycles in $\mathcal{G}$ form a 2-approximate minimal \emph{cycle} basis. 
However, the smallest homology basis contained in $\mathcal{G}$, namely $C_2, C_3, C'_4(= C_1+C_2+C_3)$ is not a 2-approximate minimal homology basis. 

We can make this example into a counter-example for any constant factor approximation, by adding more $C'_i$'s (triangles) to the sequence, each of which is larger than the previous one and is also filled in. In other words, the optimal homology basis remains $\{C_1, C_2, C_3\}$, while the smallest-size homology basis from the 2-approximate minimal cycle basis is $\{C_2, C_3, \sum_{i>1} C_i\}$.

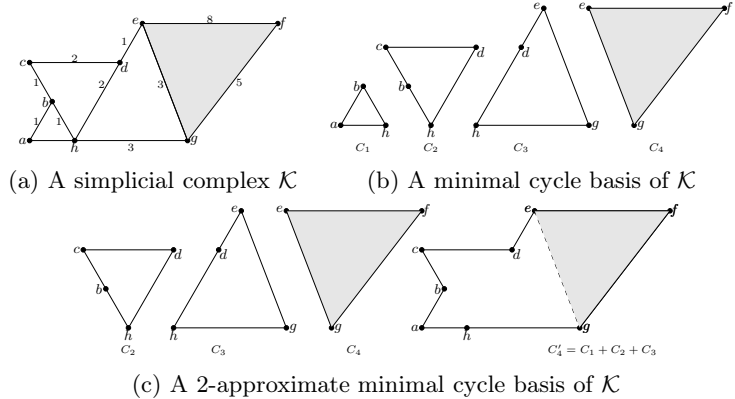
\begin{figure}[h]
\setlength{\intextsep}{5pt}%
\setlength{\columnsep}{8pt}%
\begin{center}
\begin{tabular}{cc}
\subfloat[A simplicial complex $\mathcal{K}$]{\label{exampleMCBComp}
\scalebox{.6}{
\begin{tikzpicture}[scale=0.5, auto,swap]
    \tikzstyle{node}=[font=\small,scale=1]
    \tikzstyle{edge} = [draw,thick,-]
    \tikzstyle{weight} = [font=\scriptsize]
    \tikzstyle{point}=[{draw, fill=black, circle, inner sep=1pt}]
    \foreach \pos/\name in {{(0,0)/aa}, {(1,1.73)/bb}, {(0,3.46)/cc}, {(4,3.46)/dd}, {(5,5.19)/ee}, {(11,5.19)/ff}, {(7,0)/gg}, {(2,0)/hh}}
    \node[point] (\name) at \pos {};
    \foreach \pos/\name in {{(-0.3,0)/$a$}, {(0.7,1.73)/$b$}, {(-.3,3.46)/$c$}, {(4.2,3.26)/$d$}, {(4.7,5.29)/$e$}, {(11.2,5.19)/$f$}, {(7.3,0)/$g$}, {(2,-.3)/$h$}}
    \node at \pos {\name};
    \node[weight] at (0.3, 0.85) {1};
    \node[weight] at (0.3, 2.58) {1};
    \node[weight] at (2, 3.66) {2};
    \node[weight] at (4.2, 4.39) {1};
    \node[weight] at (8, 5.39) {8};
    \node[weight] at (9.3, 2.6) {5};
    \node[weight] at (4.5, -0.3) {3};
    \node[weight] at (5.8, 2.5) {3};
    \node[weight] at (3.2, 2.5) {2};
    \node[weight] at (1.3, 0.85) {1};
    \filldraw[fill=black!10, draw=black] (5,5.19)--(11,5.19)--(7,0)--cycle;
     \draw (aa.center)--(bb.center)--(cc.center)--(dd.center)--(ee.center)--(gg.center)--(hh.center)--cycle;
     \draw (bb.center)--(hh.center);
     \draw (dd.center)--(hh.center);
     \end{tikzpicture}	
     }
    }
	&
\subfloat[A minimal cycle basis of $\mathcal{K}$]
{\label{exampleMCBMCB}
	\scalebox{.6}{
	\begin{tikzpicture}[scale=0.5, auto,swap]
		\tikzstyle{node}=[font=\small,scale=1]
       	 	\tikzstyle{edge} = [draw,thick,-]
        		\tikzstyle{weight} = [font=\scriptsize]
        		\tikzstyle{point}=[{draw, fill=black, circle, inner sep=1pt}]
        		\node[point] (a1) at (0,0){};
        		\node[point] (b1) at (1, 1.73){};
        \node[point] (h1) at (2, 0){};
        \foreach \pos/\name in {{(-0.3,0)/$a$}, {(0.7,1.73)/$b$},{(2,-.3)/$h$}}
    		\node (\name) at \pos {\name};
	\node[weight] at (1, -1) {$C_1$};
        \draw (a1.center) -- (b1.center) -- (h1.center) -- cycle;
        %\end{scope}
        \begin{scope}[xshift=2cm]
        \node[point] (b2) at (1, 1.73){};
        \node[point] (c2) at (0, 3.46){};
         \node[point] (d2) at (4, 3.46){};
         \node[point] (h2) at (2, 0){};
         \foreach \pos/\name in {{(0.7,1.73)/$b$}, {(-.3,3.46)/$c$}, {(4.2,3.26)/$d$}, {(2,-.3)/$h$}}
    \node (\name) at \pos {\name};
    \node[weight] at (2, -1) {$C_2$};
        \draw (b2.center) -- (c2.center) -- (d2.center) -- (h2.center) -- cycle;
        \end{scope}
         \begin{scope}[xshift=4cm]
         \node[point] (d3) at (4, 3.46){};
         \node[point] (e3) at (5, 5.19){};
         \node[point] (g3) at (7, 0){};
        \node[point] (h3) at (2, 0){};
        \foreach \pos/\name in {{(4.2,3.26)/$d$}, {(4.7,5.29)/$e$}, {(7.3,0)/$g$}, {(2,-.3)/$h$}}
    \node (\name) at \pos {\name};
    \node[weight] at (4, -1) {$C_3$};
        \draw (d3.center) -- (e3.center) -- (g3.center) -- (h3.center) -- cycle;
        \end{scope}
         \begin{scope}[xshift=6cm]
         \node[point] (e4) at (5, 5.19){};
        \node[point] (f4) at (11, 5.19){};
        \node[point] (g4) at (7, 0){};
        \foreach \pos/\name in {{(4.7,5.29)/$e$}, {(11.2,5.19)/$f$}, {(7.3,0)/$g$}}
    \node (\name) at \pos {\name};
    \node[weight] at (8, -1) {$C_4$};
         \filldraw[fill=black!10, draw=black] (e4.center)--(f4.center)--(g4.center)--cycle;
         \end{scope}
	\end{tikzpicture}
	}
}
\end{tabular}
\subfloat[A 2-approximate minimal cycle basis of $\mathcal{K}$]{\label{exampleMCBAppro}
	%\centering
	\scalebox{.6}{
	\begin{tikzpicture}[scale=0.5, auto,swap]
	\tikzstyle{node}=[font=\small,scale=1]
        \tikzstyle{edge} = [draw,thick,-]
        \tikzstyle{weight} = [font=\scriptsize]
        \tikzstyle{point}=[{draw, fill=black, circle, inner sep=1pt}]
        \node[point] (b2) at (1, 1.73){};
        \node[point] (c2) at (0, 3.46){};
         \node[point] (d2) at (4, 3.46){};
         \node[point] (h2) at (2, 0){};
         \foreach \pos/\name in {{(0.7,1.73)/$b$}, {(-.3,3.46)/$c$}, {(4.2,3.26)/$d$}, {(2,-.3)/$h$}}
        \node (\name) at \pos {\name};
        \node[weight] at (2, -1) {$C_2$};
        \draw (b2.center) -- (c2.center) -- (d2.center) -- (h2.center) -- cycle;
         \begin{scope}[xshift=2cm]
         \node[point] (d3) at (4, 3.46){};
         \node[point] (e3) at (5, 5.19){};
         \node[point] (g3) at (7, 0){};
        \node[point] (h3) at (2, 0){};
        \foreach \pos/\name in {{(4.2,3.26)/$d$}, {(4.7,5.29)/$e$}, {(7.3,0)/$g$}, {(2,-.3)/$h$}}
    \node (\name) at \pos {\name};
    \node[weight] at (4, -1) {$C_3$};
        \draw (d3.center) -- (e3.center) -- (g3.center) -- (h3.center) -- cycle;
        \end{scope}       
         \begin{scope}[xshift=4cm]
         \node[point] (e4) at (5, 5.19){};
        \node[point] (f4) at (11, 5.19){};
        \node[point] (g4) at (7, 0){};
        \foreach \pos/\name in {{(4.7,5.29)/$e$}, {(11.2,5.19)/$f$}, {(7.3,0)/$g$}}
    \node (\name) at \pos {\name};
    \node[weight] at (8, -1) {$C_4$};
         \filldraw[fill=black!10, draw=black] (e4.center)--(f4.center)--(g4.center)--cycle;
         \end{scope}         
         \begin{scope}[xshift=15cm]
         \node[point] (a1) at (0,0){};
         \node[point] (b1) at (1, 1.73){};
         \node[point] (c1) at (0, 3.46){};
         \node[point] (d1) at (4, 3.46){};
         \node[point] (e1) at (5, 5.19){};
         \node[point] (f1) at (11, 5.19){};
         \node[point] (g1) at (7, 0){};
         \node[point] (h1) at (2, 0){};
	\foreach \pos/\name in {{(-0.3,0)/$a$}, {(0.7,1.73)/$b$}, {(-.3,3.46)/$c$}, {(4.2,3.26)/$d$}, {(4.7,5.29)/$e$}, {(11.2,5.19)/$f$}, {(7.3,0)/$g$}, {(2,-.3)/$h$}}
        \node at \pos {\name};
        \foreach \pos/\name in {{(4.7,5.29)/$e$}, {(11.2,5.19)/$f$}, {(7.3,0)/$g$}}
    \node (\name) at \pos {\name};
    \node[weight] at (8, -1) {$C'_4=C_1+C_2+C_3$};
         \draw (a1.center)--(b1.center)--(c1.center)--(d1.center)--(e1.center)--(f1.center)--(g1.center)--(h1.center)--cycle;
         \draw[dashed] (e1.center)--(g1.center);
         \filldraw[fill=black!10] (e1.center)--(f1.center)--(g1.center);
         \end{scope}
	\end{tikzpicture}
	}
	}
	\end{center}

    \caption{An example where an approximate minimal cycle basis \textbf{does not} contain an approximate minimal homology basis.}
  \label{exampleMCB}
\end{figure}
%%%%%%%%%%%%SECTION5%%%%%%%%%%%%%
\section{Generalizing the size measure} 
\label{sec:generalization}

%Previously nearly all people used the geodesic distance between nodes as distance and the sum of edge weights as the size. A natural question is that whether there is any other way to measure a cycle. For example, \cite{chen2010measuring, guskov2001topological} use the idea of geodesic balls to measure a cycle. 
%Therefore in this section, we will give a generalization of the restriction of the distance function, which is called \textit{path-dominated distance} between two vertices. Then we will come up with a generalization of the size of a cycle in 1-dimension, under which we can always compute a minimum homology basis using the Algorithm \ref{basis}.
%Given a simplicial complex $K$ with weights defined on edges in the 1-skeleton $K^{(1)}$ of $K$, it is natural to use the induced shortest path distance in $K^{(1)}$ as a metric for vertices $V$ in $K$. 
The 1-skeleton $\K^{(1)}$ of the simplicial complex $\K$ is the set of vertices and edges in $\K$. 
If there are non-negative weights defined on edges in $\K^{(1)}$, it is natural to use the induced shortest path distance in $\K^{(1)}$ (viewed as a weighted graph) as a metric for vertices $V$ in $\K$. 
One can then measure the ``size'' of a cycle to be the sum of edge weights. 
Indeed, this is the distance and the size measure considered in Sections \ref{sec:h1} and \ref{sec:approximation}. 
In this section, we show that the algorithmic framework in Algorithm \ref{basis} can in fact be applied to a more general family of size measures. 
Specifically, first, we introduce what we call the \emph{path-dominated distance} between vertices of $\K$ (which is not necessarily a metric). 
Based on such distance function, we then define a family of ``size-functions'' under which measure we can always compute a minimal homology basis using Algorithm \ref{basis}. 
%We provide two examples of natural size-functions in Section \ref{subsec:examples}, one of which is the geomdesic ball-based measure used in \cite{chen2010measuring, guskov2001topological}. 
The shortest-path distance/size measure used in Section \ref{sec:h1}, and the geodesic ball-based measure proposed in \cite{chen2010measuring} are both special cases of our more general concepts. 
We also present another natural path-dominated distance function induced by a (potentially complex) map $F: vert(\mathcal{K}) \to Z$ defined on the vertex set  $vert(\mathcal{K})$ of $\K$ (where $Z$ is another metric space, say $\mathbb{R}^d$). 
As a result, we can use Algorithm \ref{basis} to compute the shortest 1-st homology basis of $\mathcal{K}$ induced by a map $F: vert(\mathcal{K}) \to Z$. 

\subsection{Path-dominated distance}%%%%%%%%%%%%%size
\label{subsec:distanceF}

Given a connected simplicial complex $\mathcal{K}$, suppose we are given a \emph{distance function} $d: vert(\mathcal{K}) \times vert(\mathcal{K}) \to \mathbb{R}^+\cup \{0\}$. 
We now introduce the following \emph{path-dominated distance function}. 

%In this part we give a restriction of the distance function from one vertex to another and name the functions that satisfy this restriction as path-dominated distance, which is the first step to define the size of a cycle. 
%Given a simplicial complex $\mathcal{K}$(suppose only 1 connected component), we use $n_0$ to denote the number of vertices, $n_1$ to denote the number of edges. The most natural way is to define the distance between two nodes as their geodesic distance which is the shortest path distance. Based on this idea, we now can define the path-dominated distance function $d(x,y)$ from vertex $x$ to $y$ where $x, y \in vert(\mathcal{K})$ as follows. 
\begin{definition}[Path-dominated distance]\label{def:distance} 
A function $d:vert(\mathcal{K})\times vert(\mathcal{K}) \to \mathbb{R}^+\cup\{0\}$ is a \textbf{path-dominated distance function (w.r.t. $\mathcal{K})$)} if 
\begin{itemize}
\item[(i)] $d(x,y) \ge 0$ and $d(x,x) = 0$ for any $x, y \in vert(\mathcal{K})$; 
\item[(ii)] given any two vertices $x, y \in vert(\mathcal{K})$, there exists a path $\pi^*$ connecting $x$ to $y$ in the 1-skeleton $\mathcal{K}^{(1)}$ such that $d(x,y) = \max_{u \in vert(\pi^*)} d(x,u)$. 
%for $d(x,u) \leq d(x,y))$ for any vertex $u\in vert(\pi^*)$.
%(1) $d(x,y) \geq 0$ and if $x=y$, $d(x,y)=0$. \\
%(2) for any two vertex $x, y\in V$, either $d(x,y) = \infty$ or there exists a path $\pi$ that for any vertices $u\in vert(\pi)$, $d(x,u) \leq d(x,y))$. 
\end{itemize}

%The path $\pi^*$  from condition (ii) above is referred to as a \textbf{\myshortestpath{}} between $x$ and $y$ in $\mathcal{K}$ w.r.t $d$. 
\end{definition} 

If edges in the 1-skeleton $\mathcal{K}^{(1)}$ have positive weights, then, it is easy to verify that the standard shortest path distance metric induced by $\mathcal{K}^{(1)}$ (viewed as a weighted graph) is path-dominated. %That is, setting $d(x,y)$ to be the shortest path distance  = d_{\mathcal{K}^{(1)}}(x,y)$, we have $d$ is path-dominated. 
However, note that a path-dominated distance may not be a metric. Indeed, we will shortly present a function-induced distance which is not symmetric. 

We now define ``shortest path'' in $\K^{(1)}$ induced by a path-dominated distance function. 
\begin{definition}[Path-dominated shortest path]\label{def:SPpath}
Given any $x, y \in vert(\K)$, a path $\pi^* = \langle u_0=x, u_1, \ldots, u_k = y$ connecting $x$ to $y$ via edges in $\K$ is a \textbf{\myshortestpath{} in $\K$} if for each $i \in [1, k]$, 
$d(x, u_i) = \max_{j \le i} d(x, u_j)$.  
\end{definition}

Note that this implies that any prefix of a \myshortestpath{} is also a \myshortestpath{}. 
The proof of the following statement is reasonably simple and can be found in Appendix \ref{appendix:claim:SPexists}. 

\begin{cl}\label{claim:SPexists}
A \myshortestpath{} always exists for any two vertices $x, y\in vert(\K)$. 
%(ii) Given a fixed vertex $p \in vert(\K)$, there exists a shortest path tree rooted at $p$ encoding a \myshortestpath{} from $p$ to every other vertex in $vert(\K)$. 
\end{cl}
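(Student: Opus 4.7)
The plan is to proceed by strong induction on $d(x,y)$, using property (ii) of Definition \ref{def:distance} (existence of a ``dominated'' path) to build a walk from $x$ to $y$ along which the distances $d(x,u_i)$ are non-decreasing. Because $vert(\K)$ is finite, only finitely many distance values arise, so the induction is well-founded.

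For the base case $d(x,y)=0$, property (ii) yields a path $\pi$ from $x$ to $y$ on which every vertex $u$ satisfies $d(x,u)\le 0$; combined with property (i) this forces $d(x,u)=0$ throughout, so $\pi$ is trivially a \myshortestpath{}. For the inductive step, fix $\delta>0$ and assume the claim for every $z$ with $d(x,z)<\delta$; let $y$ satisfy $d(x,y)=\delta$. Property (ii) supplies a path $\pi=\langle u_0=x,u_1,\ldots,u_k=y\rangle$ with $d(x,u_i)\le \delta$ for every $i$. The key step is to split $\pi$ at the \emph{last} vertex whose distance from $x$ is strictly below $\delta$: let $i^*=\max\{\,i : d(x,u_i)<\delta\,\}$, which is well defined since $d(x,u_0)=0<\delta$, and satisfies $i^*<k$ because $d(x,u_k)=\delta$. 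By maximality, $d(x,u_i)=\delta$ for every $i^*<i\le k$.

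The inductive hypothesis applied to $u_{i^*}$ produces a \myshortestpath{} $\pi'$ from $x$ to $u_{i^*}$, along which distances are non-decreasing and terminate at $d(x,u_{i^*})<\delta$. Because $(u_{i^*},u_{i^*+1})$ is an edge of $\pi$ in $\K^{(1)}$, we may concatenate $\pi'$ with the suffix $\langle u_{i^*+1},\ldots,u_k\rangle$ to form a walk from $x$ to $y$ whose distance-from-$x$ sequence is non-decreasing on $\pi'$, jumps up to $\delta$ at $u_{i^*+1}$, and then remains at $\delta$ until $y$. This walk satisfies $d(x,u_i)=\max_{j\le i} d(x,u_j)$ at every position, which is exactly the definition of a \myshortestpath{}, completing the induction.

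The main conceptual obstacle is that the naive attempt — applying induction to the immediate predecessor $u_{k-1}$ of $y$ on the path given by property (ii) — breaks down whenever $d(x,u_{k-1})=\delta$, since the inductive hypothesis requires a \emph{strict} drop in distance. Splitting instead at the last index where the distance is strictly less than $\delta$ resolves this: all tail vertices share the value $\delta$, so appending them to an inductively constructed monotone prefix preserves monotonicity at the single ascent $u_{i^*}\to u_{i^*+1}$ and thereafter.
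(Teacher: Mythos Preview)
Your proof is correct and follows essentially the same approach as the paper: both argue by induction on the distance to $x$ (you induct on the value $d(x,y)$ directly, the paper inducts on the rank of $y$ in the sorted-by-distance order, which is equivalent for finite $vert(\K)$), take the path $\pi$ supplied by condition (ii), split it at a vertex $u$ with $d(x,u)<\delta$, apply the inductive hypothesis to obtain a monotone prefix to $u$, and concatenate with the remaining suffix of $\pi$. The only cosmetic difference is the choice of split point: you take the \emph{last} index with distance strictly below $\delta$ (so the suffix is constant at $\delta$), whereas the paper takes the last index $l$ at which some earlier vertex has strictly larger distance (so the suffix is merely non-decreasing up to $\delta$); both choices work, and your version avoids the separate case analysis of whether $\pi$ is already monotone.
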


\paragraph{Function-induced distance.}
\setlength{\intextsep}{5pt}%
\setlength{\columnsep}{8pt}%
\begin{wrapfigure}{r}{5cm}
\begin{tikzpicture}[scale=0.5]
	\tikzstyle{point}=[{draw, fill=white, circle, inner sep=0.5pt}]
	\node[left, font=\tiny] at (0,0) {$x$};
	\node[right, font=\tiny] at (0,-3) {$y$};
	\node[point] (x) at (0,0) {};
	\node[point] (y) at (0,-3) {};
	\node[left, font=\tiny] at (-.3, -.7) {$\pi_1$};
	\node[right, font=\tiny] at (.4, -1.5) {$\pi_2$};
	\draw (x.center) to [out=180,in=100] (y.center);
	\draw (x.center) to [out=300,in=20] (y.center);
	\begin{scope}[xshift=50, yshift=-30]
	\draw[->] (-.5,0) [in =150, out = 30] to (.5,0);
	\node[above, font=\tiny] at (0,0) {$F$};
	\end{scope}
	\begin{scope}[xshift=100, yshift=-50,yslant=0,xslant=1]
    %the rectangular surface onto which the clusters are located
    	\filldraw[black!10,very thick] (-1,-1) rectangle (5,2);
	\node[right, font=\tiny] at (-1,-.8) {$Z=\reals^2$};
	\draw [color=red]plot[smooth] coordinates {(0,0) (3.5, 1) (3,0)};
	\draw [color=blue]plot[smooth,tension=.9] coordinates {(0,0) (.3,-.1)(.5, 0.1) (.7,-.2)(1, .3)(1.3, -.3)(1.6, .2) (1.8,-.3)(2.1, .2)(2.4, -.3)(2.7, 0.2) (3, 0)};
	\node[left, font=\tiny] at (0,0) {$F(x)$};
	\node[right, font=\tiny] at (3,0) {$F(y)$};
	\node[point] (x) at (0,0) {};
	\node[point] (y) at (3,0) {};
	\node[left, font=\tiny] at (2, .7) {$F(\pi_1)$};
	\node[below, font=\tiny] at (2.4, -.3) {$F(\pi_2)$};
	\end{scope}
\end{tikzpicture}
\caption{The left is the original simplicial complex. There are two paths, $\pi_1$ and $\pi_2$, connecting vertices $x$ and $y$. The right figure is their image under the map $F$ with $Z = \reals^2$ (i.e, $d_Z (\cdot, \cdot) = \| \cdot - \cdot \|$). The path $\pi_2$ is a \myshortestpath{} from $F(x)$ to $F(y)$.}
\vspace{-30pt}
\label{fig:func}
\end{wrapfigure}
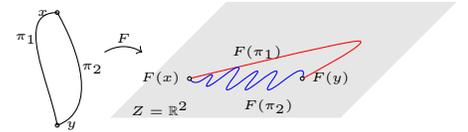

Very often, the domain $\K$ may come with additional data modeled by a function $F:vert(\mathcal{K})\to Z$ defined on vertices of $\mathcal{K}$, where the co-domain $(Z, d_Z)$ is a metric space. For example, imagine that $\K$ represents the triangulation of a region on earth, and at each vertex, we have collected $d$ sensor measurements (e.g. temperature, wind speed, sun-light strength, etc), which can be modeled by a function $F: vert(\K) \to \reals^d$. 
It is then natural to define a distance, as well as a size measure later, that depends on this function $F$. 
We introduce the following \emph{function-induced distance} $d_F: vert(\K) \times vert(\K) \to \reals$: 
\begin{definition}
Given any function $F: vert(\K)\to Z$, we define the \textbf{$F$-induced distance} $d_F(x, y)$ as follows: 
\begin{align}\label{eqn:funcD}
d_F(x, y) &= \min_{path~ \pi(x, y) \subseteq \K^{(1)}} \max_{u \in \pi(x, y)} d_Z (F(x), F(u))  ,
\end{align}
where the minimum ranges over all path $\pi(x,y)$ from $K^{(1)}$ connecting $x$ to $y$. 
\end{definition}

Intuitively, given a path $\pi$ from $x$ to $y$, $\max_{u \in \pi} d_Z(F(x), F(u))$ measures the maximum distance in terms of the function value $F$ between the starting point $x$ to any point in the path $\pi$, i.e, the maximum function distortion from $x$ to $\pi$. 
$d_F(x,y)$ is the smallest function distortion (w.r.t. $x$) needed to connect from $x$ to $y$. 
For example, in Figure \ref{fig:func}, the path $\pi_2$ is a \myshortestpath{} from $x$ to $y$, as its image $F(\pi_2)$ has a smaller maximum distance (in terms of $d_Z = \| \cdot \|$) than the image of $F(\pi_1)$. 
By the definition of function-induced distance, we have: 

\begin{cl}
Given $F: vert(\K)\to Z$, the $F$-induced distance $d_F$ is path-dominated. 
\end{cl}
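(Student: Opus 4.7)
The plan is to verify the two defining properties of a path-dominated distance function directly from the definition of $d_F$. Property~(i) is immediate: $d_Z$ is a metric, so every quantity $d_Z(F(x), F(u))$ is non-negative, making $d_F(x,y) \geq 0$; and for $x=y$, the single-vertex path $\langle x \rangle$ is a valid path in $\K^{(1)}$, yielding $d_F(x,x) \leq d_Z(F(x),F(x)) = 0$.

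For property~(ii), I would let $\pi^*$ be a path from $x$ to $y$ in $\K^{(1)}$ achieving the minimum in \eqref{eqn:funcD}. Such a $\pi^*$ exists because $\K$ is finite (there are only finitely many simple paths between any two vertices), so the outer minimum is attained. By definition, $d_F(x,y) = \max_{u \in \pi^*} d_Z(F(x), F(u))$. The goal is to show $d_F(x,y) = \max_{u \in \text{vert}(\pi^*)} d_F(x, u)$.

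The key step is the following prefix-monotonicity observation: for any vertex $u \in \pi^*$, the prefix $\pi^*_u$ of $\pi^*$ from $x$ up to $u$ is itself a valid path in $\K^{(1)}$ connecting $x$ to $u$. Hence
\begin{equation*}
d_F(x,u) \;\leq\; \max_{v \in \pi^*_u} d_Z(F(x), F(v)) \;\leq\; \max_{v \in \pi^*} d_Z(F(x), F(v)) \;=\; d_F(x,y),
\end{equation*}
which gives $\max_{u \in \text{vert}(\pi^*)} d_F(x,u) \leq d_F(x,y)$. The reverse inequality is trivial since $y \in \text{vert}(\pi^*)$, so $\max_{u \in \text{vert}(\pi^*)} d_F(x,u) \geq d_F(x,y)$. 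Combining gives equality, establishing property~(ii) with the witness path $\pi^*$.

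There is no real obstacle here; the only thing to be careful about is distinguishing the two quantities being maximized, namely $d_Z(F(x), F(u))$ (which appears inside the definition of $d_F$) versus $d_F(x,u)$ (which appears in the path-dominated condition). The prefix argument bridges the two by using that a prefix of an optimal path is an admissible (though not necessarily optimal) path for the sub-problem, which suffices for the one-sided inequality we need.
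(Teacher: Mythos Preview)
Your argument is correct. The paper itself does not give a proof of this claim at all; it simply asserts ``By the definition of function-induced distance, we have:'' and states the claim, so your write-up spells out precisely the verification the paper leaves to the reader, via the natural prefix argument.
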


\subsection{Size-measure for 1-cycles}%%%%%%%%%%%%%size
\label{subsec:size}

Previously, the most popular way to measure the ``size'' of a 1-cycle is the sum of weights of edges in the cycle. 
Another natural measure formulated by Chen and Freedman \cite{chen2010measuring} uses the minimum radius of any metric ball (centered at some vertex in $\K$) containing a cycle as its size. Intuitively, given a homology class, a smallest cycle of this class under this radius-measure corresponds to a cycle which is most ``localized'' (contained within a smallest possible metric ball). 
Using the shortest-path metric induced by weights on edges in $\K$, Chen and Freedman showed that a minimal homology basis under this radius-measure can be computed in polynomial time for any fixed-dimensional homology group. 
In what follows, we introduce a family size measures, which we refer to as \emph{\mysizeF{}s}, which generalize the radius-measure of Chen and Freedman as well as the general sum-of-weights measure. 
We then show that the algorithm from Section \ref{sec:h1} can be used to compute a minimal homology basis for $\mathsf{H}_1(\K)$ w.r.t. such \mysizeF{}s. 

We use the concept of \textit{edge-short} cycles introduced in e.g. \cite{gleiss2001short}, whose origin traces back to \cite{horton1987polynomial}. 
\begin{definition}
\label{def:edgeshort}
A 1-cycle $C$ in a complex $\mathcal{K}$ is called \textbf{edge-short}, if $\mathcal{K}$ contains a vertex $w$, an edge $e = (u, v)$, a shortest path from $w$ to $u$ and a shortest path from $w$ to $v$ such that $C$ is
the edge disjoint union of $e$ and the two paths.
\end{definition}

In our case, instead of using the shortest path metric induced by weights on the 1-skeleton $\K^{(1)}$ of $\K$, we use any path-dominated distance function $d$, and the ``shortest paths'' in the above definition will be replace by \myshortestpath{}s in $\K$ w.r.t. $d$. To emphasize the dependency on the path-dominated distance function $d$, we say that a cycle $C$ is \emph{edge-short w.r.t. $d$} if conditions in Definition \ref{def:edgeshort} holds w.r.t. \myshortestpath{}s w.r.t. $d$. 

%\yusu{The size $S$ may be confusing as we use $S_i$ as supporting vectors before. Probably we shoudl change $S$ to say $\mu$ if it is not yet used.}
\begin{definition}[\MysizeF{}]
Suppose $vert({\K})$ is equipped with a path-dominated distance function $d$. 
Let $\mathsf{Z}_1(\K)$ represent the 1-dimensional cycle group of $\K$. A function $\myS: \mathsf{Z}_1(\K)\to \mathbb{R}$ is a \textbf{\mysizeF{} (w.r.t. $d$)} if under this function, there exists a minimal homology basis for $\mathsf{H}_1(\K)$ in which all cycles are edge-short w.r.t. the path-dominated distance $d$. 

We may omit the reference to the path-dominated distance $d$ when its choice is fixed or clear. 
\end{definition}

We now prove that if a function is a \mysizeF, the Algorithm \ref{basis} can be used to compute a minimal homology basis. First, observe the following, which is implied by Theorem \ref{main-thm}. %\ref{lm:MinHomBasis}. 
%\yusu{Tianqi, please give a line where this claim follows from.}

\begin{cl} \label{cl:candidate}
If the candidate set $\mathcal{G}$ contains a minimal homology basis, then the framework Algorithm \ref{basis} will compute a minimal homology basis from the candidate set.
\end{cl}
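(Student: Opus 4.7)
The plan is to combine Theorem~\ref{main-thm} with a matroid-style exchange argument. Algorithm~\ref{basis} is identical to the procedure analyzed in Theorem~\ref{main-thm} except that in the base case \Call{ShortestCycle}{$S_i$} now returns the shortest cycle in the candidate set $\mathcal{G}$ (rather than among all $1$-cycles) that is non-orthogonal to $S_i$. The support-vector update in \Call{Update}{} still guarantees properties (1) and (2) irrespective of where the $C_i$'s come from, so Claim~\ref{independent} yields that the output $\{C_1, \ldots, C_g\}$ is a homology basis. What needs to be proved is that restricting the search to $\mathcal{G}$ does not inflate the total size.

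Fix any minimal homology basis $\mathcal{B}^{*} = \{B_1, \ldots, B_g\} \subseteq \mathcal{G}$, which exists by hypothesis. I would prove inductively that after iteration $i$ there is a homology basis $\mathcal{B}_i = \{C_1, \ldots, C_i\} \cup \{B_{j_1}, \ldots, B_{j_{g-i}}\} \subseteq \mathcal{G}$ whose total $\myS$-size is at most $\sum_{j} \myS(B_j)$. The base case $i = 0$ is simply $\mathcal{B}^{*}$.

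For the inductive step, assume the invariant at iteration $i-1$ and expand $a(C_i)$ uniquely in the annotation basis $\{a(C_1), \ldots, a(C_{i-1}), a(B_{j_1}), \ldots, a(B_{j_{g-i+1}})\}$ of $\{0,1\}^g$ as $a(C_i) = \sum_{l < i} \alpha_l\, a(C_l) + \sum_{k} \beta_k\, a(B_{j_k})$. Taking the inner product with $S_i$, property~(2) kills every $\alpha_l$-term while $m(S_i, C_i) = 1$, so $\sum_k \beta_k\, m(S_i, B_{j_k}) = 1$ over $\mathbb{Z}_2$. Hence there is an index $k^{*}$ with $\beta_{k^{*}} = 1$ and $m(S_i, B_{j_{k^{*}}}) = 1$. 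By standard matroid exchange, replacing $B_{j_{k^{*}}}$ by $C_i$ yields a new homology basis $\mathcal{B}_i \subseteq \mathcal{G}$; and since $B_{j_{k^{*}}} \in \mathcal{G}$ is non-orthogonal to $S_i$ while $C_i$ is the \emph{shortest} such cycle in $\mathcal{G}$, we get $\myS(C_i) \le \myS(B_{j_{k^{*}}})$, so the total size does not increase.

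After $g$ iterations the invariant gives $\sum_{i} \myS(C_i) \le \sum_{j} \myS(B_j)$, meaning the output homology basis has total size at most that of a minimal basis, so it must itself be minimal. The main subtle step is the exchange: one must simultaneously produce a swappable $B_{j_k}$ \emph{and} argue that swapping preserves independence. The single inner-product calculation above pins down both at once (the non-vanishing coefficient $\beta_{k^{*}}$ supplies matroid exchange, and the condition $m(S_i, B_{j_{k^{*}}}) = 1$ places $B_{j_{k^{*}}}$ in the pool over which $C_i$ was chosen to be shortest), after which the induction closes the proof.
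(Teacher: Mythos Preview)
Your proof is correct and is precisely the de~Pina exchange argument that underlies Theorem~\ref{main-thm}. The paper does not give a separate proof of this claim at all; it simply states that the claim ``is implied by Theorem~\ref{main-thm},'' whose own proof is deferred to~\cite{DivideConquer,depina} and is exactly the matroid-style swap you have written out. So you have taken the same route as the paper, only you have unpacked in full the argument that the paper leaves to a citation. One tiny point worth making explicit (you use it implicitly): the existence of a cycle $C_i\in\mathcal{G}$ with $m(S_i,C_i)=1$ at every step follows because the annotations of the basis $\mathcal{B}_{i-1}\subseteq\mathcal{G}$ span $\{0,1\}^g$ and, by property~(2), the witness cannot be any of $C_1,\ldots,C_{i-1}$, hence must be some $B_{j_k}\in\mathcal{G}$.
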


What remains is to show how to compute a candidate set containing a minimal homology basis. 
For simplicity, from now we fix a path-dominated distance function $d$, and simply use \emph{shortest paths} to refer to the \emph{\myshortestpath{}s w.r.t. $d$}. 
We assume that the shortest paths are unique -- In Appendix \ref{appendix:ObtainLex}, we describe how to guarantee this uniqueness condition (by assigning certain order to the shortest paths), and show that the shortest path tree $T_p$ encoding all unique \myshortestpath{}s to any root $p\in vert(\K)$ can be computed in $O(n\log n)$ time (with $n = |\K^{(1)}|$) by the standard approach.  

We now construct a candidate set $\mathcal{G}$ in the same manner as in Section \ref{sec:h1}. 
First for every vertex $p$, we build a candidate set $\mathcal{G}_p$. Let $\Pi_p(u, v)$ denote the unique tree path between two vertices $u$ and $v$. For every nontree edge $e=(u, v)$, $C(p, e)=e\circ \Pi_p(u, v)$ is a cycle. We add all such $C(p, e)$ into $\mathcal{G}_p$, i.e. $\mathcal{G}_p=\cup_{e\in E\setminus edge(T_p)}C(p, e)$. Then take the union of all such candidate sets, $\mathcal{G}$ can be constructed as $\mathcal{G}=\cup_{p \in vert(\mathcal{K})}\mathcal{G}_p$.

%Now we present that under any size function, the candidate set we have always contains a minimal homology basis. The following lemma is needed for the proof.
\begin{lemma}\label{CanEdge}
The candidate set $\mathcal{G}$ contains a minimal homology basis when the size of a cycle is measured by a \mysizeF{} w.r.t. some path-dominated distance function $d$. 
%contains all edge-short cycles if the function to measure size of cycles is a \mysizeF{}. 
\end{lemma}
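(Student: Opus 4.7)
The plan is to show that every edge-short cycle (with respect to $d$) belongs to $\mathcal{G}$; the lemma then follows at once, because by the definition of \mysizeF{} there exists a minimal homology basis $\mathcal{B}^* = \{B_1^*, \ldots, B_g^*\}$ for $\mathsf{H}_1(\K)$ all of whose cycles are edge-short w.r.t.\ $d$.

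First, I would fix for each $B_i^*$ an edge-short witness $(w_i, e_i = (u_i, v_i), \pi_{u_i}, \pi_{v_i})$, where $\pi_{u_i}$ and $\pi_{v_i}$ are \myshortestpath{}s from $w_i$ to $u_i$ and $v_i$ respectively, and $B_i^*$ is their edge-disjoint union with $e_i$. Invoking the uniqueness of \myshortestpath{}s established in Appendix \ref{appendix:ObtainLex}, both $\pi_{u_i}$ and $\pi_{v_i}$ must coincide with the canonical tree paths $\Pi_{w_i}(w_i, u_i)$ and $\Pi_{w_i}(w_i, v_i)$ inside the shortest path tree $T_{w_i}$.

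Next, I would use edge-disjointness to pin down the relative position of $w_i, u_i, v_i$ inside $T_{w_i}$. Assuming $w_i \notin \{u_i, v_i\}$, any nontrivial common prefix of $\Pi_{w_i}(w_i, u_i)$ and $\Pi_{w_i}(w_i, v_i)$ would contradict edge-disjointness, so $w_i$ itself must be the least common ancestor of $u_i$ and $v_i$ in $T_{w_i}$. Consequently $\Pi_{w_i}(u_i, v_i) = \pi_{u_i}\cup \pi_{v_i}$, giving
\[
B_i^* \;=\; e_i \cup \pi_{u_i}\cup \pi_{v_i} \;=\; e_i\circ \Pi_{w_i}(u_i,v_i) \;=\; C(w_i, e_i).
\]
The degenerate cases $w_i = u_i$ or $w_i = v_i$ are handled separately, collapsing one of the two paths to a point and yielding $B_i^* = C(u_i, e_i)$ or $C(v_i, e_i)$. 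A parallel short check shows $e_i$ cannot be a tree edge of $T_{w_i}$, because otherwise one of $\pi_{u_i}, \pi_{v_i}$ would already contain $e_i$, violating edge-disjointness. Hence in every case $B_i^* \in \mathcal{G}_{w_i}\subseteq \mathcal{G}$.

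Putting this together, $\mathcal{B}^*\subseteq \mathcal{G}$, so $\mathcal{G}$ contains a minimal homology basis. The main obstacle is the LCA step: one must argue carefully that edge-disjointness of the two tree-path witnesses forces $w_i$ into the LCA position (or collapses one of the paths to a trivial path), for only then does the edge-short cycle collapse to the precise form $C(w_i, e_i)$ used in the construction of $\mathcal{G}$. Everything else is routine bookkeeping and reduces to the uniqueness-of-shortest-paths tie-breaking already handled in Appendix \ref{appendix:ObtainLex}.
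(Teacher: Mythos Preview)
Your proposal is correct and follows essentially the same route as the paper's own proof: reduce the lemma to showing every edge-short cycle lies in $\mathcal{G}$, invoke uniqueness of \myshortestpath{}s so that the two witness paths are tree paths in $T_w$, and conclude $C = C(w,e)$ for a non-tree edge $e$. The paper's argument is terser and leaves the LCA observation implicit (edge-disjointness of the two root-to-leaf tree paths forces them to diverge at the root $w$, hence $\Pi_w(u,v)=SP(w,u)\cup SP(w,v)$); your write-up makes this explicit, which is a harmless and arguably helpful elaboration rather than a different approach.
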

\begin{proof}
By results from Appendix \ref{appendix:ObtainLex}, we can assume that there is only a unique \myshortestpath{} between any two vertices $u, v\in vert(\K)$, which we denote as $SP(u, v)$. 
Now take any edge-short cycle $C$. As it is edge-short, we can find a vertex $w$ and an edge $e=(u, v)$ such that the cycle $C$ is the disjoint union of $SP(w, u)$, $SP(w, v)$ and $e$. On the other hand, the unique shortest paths $SP(w, u)$ and $SP(w, v)$ are in the shortest path tree $T_w$. This means that $e \notin T_w$. Hence the cycle $C$ is a candidate cycle $C(w,e)$ from the set $\mathcal{G}_w$. 
It then follows that the collection $\mathcal{G}$ contains all edge-short cycles. 
The lemma then follows from the definition of \mysizeF{}s and Lemma \ref{cl:candidate}. \qed
%We claim that the edge $e$ is the only edge in the cycle $z$ which is an nontree edge of tree $T_w$. Suppose not, then one of $SP(w, u)$ and $SP(w, v)$ is the union of the other and the edge $e$. Without loss of generality, we say the $SP(w, u)$ is $SP(w, v)$ together with $e$. Then the disjunctive union of $SP(w, u)$, $SP(w, v)$ and $e$ will be empty which is not a cycle, a contradiction. The fact that the edge $e$ is the only nontree edge in $z$ turns out that the cycle $z$ has the form $C(w, e)$. Therefore the cycle $z$ is in the candidate set.
\end{proof}

Now that we have a candidate set that contains a minimal homology basis, we can apply the divide and conquer algorithm (Algorithm \ref{basis}) from Section \ref{sec:h1}, and by Claim \ref{cl:candidate}, this will output a minimal homology basis. We conclude with the following main result. 

\begin{theorem}\label{thm:general}
Suppose sizes of 1-cycles are measured by a \mysizeF{} w.r.t. a path-dominated distance function $d$. 
Then, we can compute a minimal homology basis for $\mathsf{H}_1(\K)$  in $O(n^{\omega}+n^2g)$ time, where $n$ is the size of $2$-skeleton of $\K$ and $g$ is $rank(\mathsf{H}_1(\K))$. 
\end{theorem}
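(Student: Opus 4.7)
The plan is to prove Theorem \ref{thm:general} by instantiating the algorithmic framework of Section \ref{sec:h1b} on the generalized candidate set guaranteed by Lemma \ref{CanEdge}, and then showing that the time analysis of Section \ref{sec:timeh1} transfers essentially verbatim once the path-dominated distance $d$ replaces the standard shortest-path metric.

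First, I would annotate all edges of $\K$ in $O(n^{\omega})$ time via Proposition \ref{TimeAnnotation}. Next, I would construct the candidate set $\mathcal{G}$ exactly as described just before Lemma \ref{CanEdge}: for each vertex $p \in vert(\K)$, build a \myshortestpath{} tree $T_p$ under the path-dominated distance $d$, using the tie-breaking convention of Appendix \ref{appendix:ObtainLex} to enforce the uniqueness of \myshortestpath{}s so that the construction takes $O(n\log n)$ time per root $p$. For every non-tree edge $e=(u,v)$ I would add $C(p,e) = e \circ \Pi_p(u,v)$ to $\mathcal{G}$. This yields $|\mathcal{G}| = O(n^2)$ candidate cycles in $O(n^2\log n)$ time total, and alongside we record the size $\myS(C(p,e))$ of each candidate as it is generated. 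By Lemma \ref{CanEdge}, $\mathcal{G}$ is guaranteed to admit a minimal homology basis under the chosen \mysizeF{}.

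Finally, I would invoke Algorithm \ref{basis} on $\mathcal{G}$. Correctness follows from Claim \ref{cl:candidate} together with Theorem \ref{main-thm}: since the divide-and-conquer procedure maintains properties (1)--(3) and $\mathcal{G}$ contains a minimal basis, the algorithm returns one. For the running time, the recurrence of Section \ref{sec:timeh1}, namely $T(\cdot,k) = 2T(\cdot, k/2) + O(k^{\omega-1} n)$ with base case $T(\cdot, 1) = O(n^2)$, solves to $T(1, g) = O(n^2 g + g^{\omega-1} n)$. Adding the $O(n^{\omega})$ annotation cost and the $O(n^2 \log n)$ candidate-set construction cost gives the stated bound $O(n^{\omega} + n^2 g)$.

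The main point that requires care is that the base case \Call{ShortestCycle}{$S_i$} still runs in $O(n^2)$ time under a general \mysizeF{}. The annotation-level decomposition $m(S_i, C(p,e)) = l_p(u) + l_p(v) + m(S_i, e)$ used in Section \ref{sec:h1b} is purely algebraic in the annotations and the tree structure of $T_p$, and so it remains valid for any path-dominated $d$ once $T_p$ is the \myshortestpath{} tree. The only ingredient that depends on the chosen size measure is the value $\myS(C(p,e))$ itself, which we precomputed and stored during the construction of $\mathcal{G}$, enabling $O(1)$-time access per candidate and therefore $O(n^2)$-time selection of the shortest non-orthogonal cycle in $\mathcal{G}$. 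All remaining components of the algorithm---the matrix manipulations in \Call{Update}{} and the recursive control flow---operate solely on annotations and support vectors, and are therefore entirely unaffected by the choice of size measure, completing the reduction to the analysis of Section \ref{sec:timeh1}.
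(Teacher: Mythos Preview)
Your proposal is correct and follows essentially the same approach as the paper: build the candidate set $\mathcal{G}$ via path-dominated shortest path trees (Lemma \ref{CanEdge}), apply Algorithm \ref{basis}, and invoke Claim \ref{cl:candidate} together with Theorem \ref{main-thm} for correctness, while reusing the recurrence of Section \ref{sec:timeh1} for the time bound. Your explicit remark that the base-case labeling $m(S_i,C(p,e)) = l_p(u)+l_p(v)+m(S_i,e)$ depends only on annotations and the tree structure of $T_p$, not on the size measure, is a useful clarification that the paper leaves implicit.
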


\subsection{Examples of \mysizeF{}s}
\label{subsec:examples}

\paragraph{Sum-of-weights size function.}
As mentioned earlier in Section \ref{subsec:distanceF}, given a weight function $w: edge(\K) \to \reals^+$, the shortest path distance $d_{\K}$ induced by the 1-skeleton $\K^{(1)}$ (viewed as a weighted graph) is a path-dominated function. 
%Also, given a function $F: vert(\K) \to Z$ defined on $\K$, the $F$-induced distance $d_F$ as introduced in Section \ref{subsec:distanceF} is also a path-dominated function. 
Now given weights $w: edge(\K) \to \reals^+$, the size measure $\myS_w: \mathsf{Z}_1(\K) \to \reals^+$ assigning $\myS_w(C) = \sum_{e\in C} w(e)$ is a \mysizeF{} w.r.t. the shortest path distance function $d_{\K}$. 
Hence we can obtain the main result of Section \ref{sec:h1} by applying Theorem \ref{thm:general} to the \mysizeF{} $\myS_w$. 

\paragraph{Radius-size function.}
Alternatively, we now consider the radius-based size function used e.g. in \cite{chen2010measuring,guskov2001topological, wood2004removing}. 
More specifically, suppose we are given a simplicial complex $\mathcal{K}$, and a path-dominated distance function $d$ (which may not be a metric) on $vert(\K)$. 
Define the ball $B_p^r$ centered at $p$ of radius $r$ to be $B_p^r=\{\sigma\in \mathcal{K}\text{ : }\forall x\in vert(\sigma), d(p, x)\leq r\}$. We can then define \emph{radius-size function $\myS_R: \mathsf{Z}_1(\K) \to \reals^+$} such that $\myS_R(C)$ of a 1-cycle $C$ is the smallest $r$ such that $C \subseteq B^r_p$ for some $p \in vert(\K)$. 
%(Note that this function can in fact be used to measure the size of any $d$-cycle, as e.g, in \cite{chen2010measuring}.)

% the Then we define the size function $\mu(C)$ of a cycle $C$ as the smallest ball centered at a vertex that contains $C$, i.e. $\mu(C)=min_{r}\{r\text{ : }\exists p\in vert(\mathcal{K})\text{ such that }C\subset B_p^r\}$. 

\begin{prop} \label{prop:radius}
%The candidate set always contains a minimum homology basis in which all cycles are edge-short.
$\myS_R$ is a \mysizeF{} w.r.t. any path-dominated distance function $d$. 
\end{prop}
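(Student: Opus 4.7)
The plan is to show that under $\myS_R$ there is a minimal homology basis whose cycles are all edge-short w.r.t.\ any path-dominated distance $d$. The key ingredient is a decomposition: for any $1$-cycle $C$ with $\myS_R(C)=r$ witnessed by a vertex $p$ (so $C\subseteq B^r_p$), the cycle $C$ can be written over $\mathbb{Z}_2$ as a sum of edge-short cycles of the form $C(p,e)=e\circ\Pi_p(u,v)$, each contained in $B^r_p$ and hence of radius-size at most $r$.

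First I would verify that each $C(p,e)\subseteq B^r_p$. By Definition \ref{def:SPpath}, the distances $d(p,u_i)$ along a \myshortestpath{} from $p$ are non-decreasing, so every intermediate vertex $u_i$ of $\Pi_p(v)$ satisfies $d(p,u_i)\le d(p,v)\le r$. Combined with $e\subseteq B^r_p$, this gives $C(p,e)\subseteq B^r_p$, so $\myS_R(C(p,e))\le r$. The decomposition $C=\sum_{e\in C\setminus T_p}C(p,e)$ is then the standard fundamental-cycle identity relative to the spanning tree $T_p$ of the subgraph $C\cup T_p$. In particular, the homology classes of edge-short cycles span $\mathsf{H}_1(\K)$, and every cycle decomposes into edge-short cycles whose radius-sizes are no larger than its own.

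The second step is the familiar matroid-exchange greedy argument. Enumerate all edge-short cycles in non-decreasing order of $\myS_R$ and greedily pick each one whose homology class is independent of those already chosen. This produces a homology basis $\{C_1,\ldots,C_g\}$ of edge-short cycles with $\ell_i:=\myS_R(C_i)$ sorted in non-decreasing order. If $\{C_1^*,\ldots,C_g^*\}$ is any minimal basis with sorted sizes $\ell_1^*\le\cdots\le\ell_g^*$, and if $i$ is the smallest index with $\ell_i>\ell_i^*$, then by the decomposition step each $C_j^*$ for $j\le i$ is a $\mathbb{Z}_2$-sum of edge-short cycles of size at most $\ell_j^*\le\ell_i^*<\ell_i$. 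All these cycles have already been considered by the greedy before $C_i$, so the $i$ independent classes $[C_1^*],\ldots,[C_i^*]$ would lie in the span of the $i-1$ classes $[C_1],\ldots,[C_{i-1}]$, a contradiction. Hence $\ell_i\le\ell_i^*$ for all $i$, giving $\sum_i\ell_i\le\sum_i\ell_i^*$, so $\{C_1,\ldots,C_g\}$ is a minimal homology basis consisting entirely of edge-short cycles, which is exactly the definition of $\myS_R$ being a \mysizeF{} w.r.t.\ $d$.

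The main obstacle is the containment $C(p,e)\subseteq B^r_p$: everything rests on being able to route the tree portions of the edge-short cycle inside the witnessing ball, which in turn relies on distances along a \myshortestpath{} being monotone. Without the path-dominated assumption this step can fail even for simple asymmetric distances, so this is precisely where the generality of Definition \ref{def:distance} is used. The remainder is the Erickson–Whittlesey style exchange argument transplanted from the sum-of-weights setting to the radius setting.
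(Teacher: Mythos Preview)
Your proof is correct and rests on the same key lemma as the paper's: any cycle $C$ with $\myS_R(C)=r$ witnessed at a vertex $p$ decomposes over $\mathbb{Z}_2$ as $\sum_{e\in C\setminus T_p}C(p,e)$, with each summand edge-short and contained in $B_p^r$ (hence of radius-size at most $r$). Your justification of the containment $C(p,e)\subseteq B_p^r$ via the monotonicity in Definition~\ref{def:SPpath} is exactly the point where the path-dominated hypothesis is used, and the paper relies on it in the same way.

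The only difference is in how the argument is closed. The paper starts from an arbitrary minimal basis, locates the first non-edge-short cycle $C_i$, applies the decomposition to find an edge-short $C'$ with $\myS_R(C')\le\myS_R(C_i)$ whose class is independent of the remaining basis elements, swaps $C'$ for $C_i$, and iterates. You instead build an edge-short basis greedily in order of $\myS_R$ and use a rank argument to show its size sequence is termwise at most that of any minimal basis. Both are standard matroid-style arguments and are equivalent in strength; your greedy version is slightly cleaner to state, while the paper's exchange version makes the transformation of a given minimal basis into an edge-short one more explicit.
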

\begin{proof} 
We need to prove that there exists a minimal homology basis where each cycle inside is edge-short. 
Assume this is not the case. 
Then given any minimal homology basis $\mathcal{B}$, there exists a cycle $C$ which is not edge-short. 
Suppose cycles $\mathcal{B} = \{C_1, \ldots, C_g\}$ are sorted in nondecreasing order of their radius-size, and $C_i$ is the first cycle in $\mathcal{B}$ that is not edge-short. %; that is, $C_1, \cdots, C_{i-1}$ are all edge-short. 
Let $B_p^r$ be the smallest ball containing $C_i$ with $p \in vert(\K)$; that is, $\myS_R(C_i) = r$. 
%Note that by definition of $B_p^r$, for any vertex $u$ in the ball $B_p^r$, the distance $d(p, u)\leq r$. 
Let $T_p$ denote the shortest path tree rooted at $p$, and $Q$ denote the set of edges in $C_i$ which are not in $T_p$. 
Note that $Q$ cannot be empty; otherwise, $C_i$ cannot be a cycle as all edges in it are tree edges. 
For every edge $e = (u,v)$ in $Q$, we can construct a cycle $C(p, e)$ as $SP(p,u)+SP(p,v)+e$, where $SP(x,y)$ denote the tree path in $T_p$ from $x$ to $y$. 
It is easy to see that for each such cycle $C(p, e)$ with $e\in Q$, its radius-size $\myS_R(C(p,e)) \le r$ as it is completely contained within $B^r_p$. 
Note that $C_i$ can be represented as the sum of all such $C(p, e)$, i.e. $C_i=\sum_{e\in Q} C(p, e)$. 
This is because that $C_i = \sum_{e \in C_i} C(p,e)$. However, for an edge $e\in C_i \cap T_p$, $C(p,e)$ is the empty set. Hence only edges from $C_i \setminus T_p (= Q)$ contribute to this sum. 
%Now we claim that the cycle $C_i$ can be represented as the sum of all such $C(p, e)$, i.e. $C_i=\sum_{e\in Q} C(p, e)$. This is because the set of cycle $\{ C(p, e') \mid e' \notin T_p\}$ forms a cycle basis for the graph $\K^{(1)}$. Thus every cycle can be written as a unique linear combination of these cycles that is decided by the existence of the nontree edges in $C_i$. 

Now consider the set of cycles $\mathcal{Q} = \{ C(p,e) \mid e\in Q\}$. 
As $C_i$ is in a minimal homology basis $\mathcal{B}$, its homology class $[C_i]$ is independent of those generated by cycles in $\mathcal{B} \setminus \{C_i\}$. Hence there exists at least a cycle $C'\in \mathcal{Q}$ such that $[C']$ is independent of the homology class of all cycles in $\mathcal{B} \setminus \{C_i\}$. Now let $\mathcal{B'}=\mathcal{B}\cup \{C'\} \setminus \{C_i\}$ which is also a homology basis.
%This means that we can replace $C_i$ by $C'$ in $B$ and obtain another homology basis $B'$. 
Recall that any cycle in $\mathcal{Q}$ has radius-size at most $r$. We have two cases:  
(i) If $\myS_R(C') < r  (=\myS_R(C_i)$, then $\mathcal{B'}$ has a smaller size sequence than $\mathcal{B}$, and thus $\mathcal{B}$ cannot be a minimal homology basis. Thus we have a contradiction, meaning that all cycles in $\mathcal{B}$ must be edge-short. 
(ii) If $\myS_R(C') = r$, then $\mathcal{B'}$ is also a minimal homology basis. 
If $B'$ contains only edge-short cycles, then we are done. 
If not, then we identify the next cycle that is not edge-short $C_j$, and it is necessary that $j > i$. We then repeat the above argument with $C_j$. 
In the end, either we find a contradiction, meaning that the edge-short cycle cannot exist in the basis we are inspecting, or we manage to replace all non-edge-short cycles to be edge-short ones of equal size, and maintain a homology basis. In the latter case, we construct a minimal homology basis with only edge-short cycles. 
In either case, the proposition follows. \qed
\end{proof}

It then follows from the above proposition that Algorithm \ref{basis} computes a minimal homology basis under the radius-size function w.r.t. any path-dominated distances in time $O(n^{\omega}+n^2g)$. 
In particular, combining with the two path-dominated distance functions examples we have: 

\vspace*{0.07in}\textbf{Example 1: } $d = d_\K$, the shortest path distance induced by the weighted graph $\K^{(1)}$. Under this path-dominated distance, the minimal homology basis problem under the radius-size function w.r.t. $d_\K$ is exactly the 1-dimensional case of the problem studied in \cite{chen2010measuring}. An $O(n^4 g)$ time algorithm was presented to solve this problem in any dimension in~\cite{chen2010measuring}. However, by Theorem \ref{thm:general}, we can compute a minimal homology basis of in $O(n^{\omega}+n^2g)$ time, which is a significant improvement when focusing on $\mathsf{H}_1$ group. \\

\textbf{Example 2: } Given a function $F: vert(\K) \to Z$ defined on $\K$, recall that the $F$-induced distance $d_F$ as introduced in Section \ref{subsec:distanceF} is a path-dominated function. 
Now set $d = d_F$. 
Intuitively, the radius-size function $\myS_R(C)$ w.r.t. $d_F$ measures the radius of the smallest metric ball in the co-domain $Z$ that contains the image $F(C)$ of the cycle $C$ under map $F$. That is, $\myS_R(C)$ measures the ``size'' of $C$ w.r.t. the variation in the function $F$. Hence we also refer to the radius-size function w.r.t. $d_F$ as the \emph{$F$-induced radius-size function}. 
We believe that the $F$-induced distance function and $F$-induced radius-size function are useful objects of independent interests. 
The minimal homology basis of $\K$ under such a $F$-induced radius-size function can also be computed in $O(n^{\omega}+n^2g)$ time.

\section{Conclusions}
In this paper we have given improved algorithms for computing a minimal homology basis for 1-dimensional homology group of a simplicial complex. What about higher dimensional homology?
For high dimensions, it is known from \cite{chen2010hardness} that computing a minimum homology basis under volume measure is NP-hard. But it follows from \cite{chen2010measuring} that one can extend the radius-size measure (See Section \ref{subsec:examples}) to high dimensions under which an algorithm to compute a minimum homology basis in polynomial time exists. It runs in time $O(gn^4)$ where $g$ is the rank of $d$-dimensional homology group $\mathsf{H}_d$. We can improve this algorithm, using persistence algorithm \cite{edelsbrunner2008persistent} as well as annotations for $d$-simplices \cite{annotation}, so that the time complexity improves to $O(n^{\omega+1})$ which is better when $g=\Theta(n)$. The details are presented in the Appendix A.

\section*{Acknowledgements}
This works is partially supported by National Science Foundation (NSF) under grants CCF-1526513, 1740761 and 1733798.
\bibliographystyle{splncs03}
\bibliography{CameraMCB}
\newpage
\appendix
%%%%%%%%%%%%%%%%Hd%%%%%%%%%
\section{Computing a minimal homology basis for $\mathsf{H}_d(\K)$}
\noindent Let $\mathcal{K}$ be a simplicial complex with $n$ simplices and let $g$ be the $d$-dimensional Betti number, i.e. $g=rank(\mathsf{H}_d(\K))$. The discrete geodesic distance $d_p: vert(\mathcal{K})  \to \mathbb{R}$ from a vertex $p$ is given by $q\mapsto dist(p,q)$ where $dist(p,q)$ is the length of the shortest path from $p$ to $q$. Extending this definition to general simplices, we have $\forall \sigma \in \mathcal{K}, d_p(\sigma)=max_{q\in vert(\sigma)}d_p(q)$. Then the geodesic ball $B_p^r$ of radius $r$ centered at $p$ is defined as $B_p^r=\{\sigma \in \mathcal{K}: d_p(\sigma)\leq r\}$. Clearly, $B_p^r\subseteq \mathcal{K}$, and it is a subcomplex of $\mathcal{K}$. This is because for all faces $\sigma'$ of $\sigma$, $d_p(\sigma') \leq d_p(\sigma)$, which implies that all faces of a simplex in $B_p^r$ are also in $B_p^r$.

The size of a cycle $C$ is defined as $\mu(C)=min\{r : \exists p\in vert(K), s.t. ~C\subset B_p^r\}$ \cite{chen2008quantifying}. In words, it is the radius of the smallest ball centered at some vertex $p$ of $C$, which contains $C$. The definition of a minimal homology basis becomes:
\begin{definition}\label{def:MCBd}
Given a simplicial complex $\K$,  a set of cycles $\{C_{1}, C_{2}, \cdots, C_{g}\}$ with  $g=rank(\mathsf{H}_d(\K))$ is a $d$-dimensional minimal homology basis if (1) the homology classes $\{[C_{1}], [C_{2}], \cdots, [C_{g}]\}$ constitute a homology basis and (2) the sizes $\{\mu(C_{1}), \mu(C_{2}), \text{. . .}, \mu(C_{g})\}$ are lexicographically smallest among all such bases.
\end{definition}

\subsection{Algorithm}
In this section, we describe an algorithm to compute a minimal $d$-dimensional homology basis where $d\geq 1$. There are two steps in the algorithm: First computing a candidate set which contains a minimal homology basis and then computing a minimal homology basis from the candidate set. All computations are over $\mathbb{Z}_2$.
%We use annotation to denote and distinguish each cycle. Hence the first step is to annotate all $d$-simplices. 

%\begin{algorithm}
%\caption{Minimal Homology Basis}\label{MHBd}
% \begin{algorithmic}[1] 
% 	%\State Annotate $d$-simplices in $\mathcal{K}$\cite{annotation}
%	\State Compute a candidate set which contains a minimal homology basis
%	\State Compute a minimal homology basis from the candidate set.
%\end{algorithmic}
%\end{algorithm}
\subsubsection{Computing a candidate set.}
We now describe how to compute a candidate set of cycles including a minimal homology basis. We apply the persistent homology algorithm to generate the candidate set $\mathcal{C}(p)$ for a vertex $p$ with the following filtration: Simplices are sequenced in non-decreasing order of geodesic distances $d_p(\cdot)$ while placing a simplex before all its cofaces that have the same geodesic distance.
% Note that for any simplex $\sigma$, any proper face $\sigma'$ of $\sigma$ has exact one of the following two properties: (1) $d_p(\sigma')<d_p(\sigma)$. (2) $d_p(\sigma')=d_p(\sigma)$ and the dimension of $\sigma'$ is smaller than $\sigma$. Hence the order of simplices is unique.
We focus on the essential homology classes computed by persistent algorithm. There are $g$ of them. For each essential homology class $h$, we denote its birth time as $r_p(h)$.
%We use the algorithm in \ref{chen2010measuring} to capture the cycle $C_h$ generated by a creator $\sigma$ of class $h$ and add the cycle $C_h$ into the candidate set. 
For any vertex $p$, the number of candidate cycles in $\mathcal{C}(p)$ is $g$. Thus, the number of cycles of the candidate set $\mathcal{C}$ is $O(g n)$. %The following claim follows and the proof is in the full version of the paper. %\TL{Professors, I omit the proof of this lemma due to the page limit}\\
\begin{cl}
The candidate set $\mathcal{C}$ includes a minimal homology basis.
\end{cl}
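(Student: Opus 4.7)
\smallskip
\noindent\textbf{Proof proposal.}
The plan is a greedy/matroid-exchange argument built on top of the standard persistence basis property. Fix any minimal $d$-dimensional homology basis $\mathcal{B}^* = \{C_1^*,\ldots,C_g^*\}$ sorted so that $\mu(C_1^*)\le\mu(C_2^*)\le\cdots\le\mu(C_g^*)$, and write $r_i=\mu(C_i^*)$. By definition of $\mu$, for each $i$ there is a vertex $p_i$ with $C_i^*\subseteq B_{p_i}^{r_i}$, so the class $[C_i^*]$ lies in the image $V_{p_i}^{r_i}:=\mathrm{im}\bigl(\mathsf{H}_d(B_{p_i}^{r_i})\to\mathsf{H}_d(\K)\bigr)$. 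The goal is to produce cycles $A_1,\ldots,A_g\in\mathcal{C}$ whose homology classes are independent and satisfy $\mu(A_i)\le\mu(C_i^*)$; by the minimality of $\mathcal{B}^*$ the inequalities must actually be equalities, so $\{A_1,\ldots,A_g\}$ is itself a minimal homology basis contained in $\mathcal{C}$.

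The central ingredient is the following persistence property, which I would state and use as a lemma: for every vertex $p$ and every threshold $r$, the collection $\{[c]:c\in\mathcal{C}(p),\ r_p([c])\le r\}$ is a basis of $V_p^r$. This is the standard fact that essential classes born by time $r$ in a filtration by $d_p(\cdot)$ are exactly the classes representable by cycles contained in $B_p^r$, together with the fact that the persistence algorithm outputs one cycle representative per essential class. From this lemma it follows immediately that $\mu(c)\le r_p([c])$ for each $c\in\mathcal{C}(p)$, since $c$ is already present in $B_p^{r_p([c])}$.

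Using this, I would prove by induction on $i$ the following claim:
\begin{cl}
For each $i\le g$ there exist $A_1,\ldots,A_i\in\mathcal{C}$ such that $[A_1],\ldots,[A_i]$ are linearly independent in $\mathsf{H}_d(\K)$ and $\mu(A_j)\le r_j$ for all $j\le i$.
\end{cl}
The base case $i=1$ follows by picking any $c\in\mathcal{C}(p_1)$ with $r_{p_1}([c])\le r_1$, since $V_{p_1}^{r_1}\ne 0$ (it contains $[C_1^*]$). For the inductive step, suppose $A_1,\ldots,A_{i-1}$ have been constructed. Because $[C_1^*],\ldots,[C_i^*]$ are linearly independent and each $[C_j^*]$ with $j\le i$ lies in $V_{p_j}^{r_j}$, the subspace $U_i\subseteq\mathsf{H}_d(\K)$ spanned by all classes $[c]$ with $c\in\mathcal{C}$ and $\mu(c)\le r_i$ satisfies $[C_1^*],\ldots,[C_i^*]\in U_i$, hence $\dim U_i\ge i$. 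Therefore $U_i$ is not contained in the $(i-1)$-dimensional span of $[A_1],\ldots,[A_{i-1}]$, so some $c\in\mathcal{C}$ with $\mu(c)\le r_i$ has $[c]$ independent of $[A_1],\ldots,[A_{i-1}]$; take $A_i:=c$.

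Applying the claim with $i=g$ yields a homology basis $\{A_1,\ldots,A_g\}\subseteq\mathcal{C}$ whose size sequence is coordinate-wise no larger than that of $\mathcal{B}^*$, and minimality of $\mathcal{B}^*$ forces equality of the full size sequences, which is what the definition of minimal homology basis demands. The main obstacle I anticipate is the careful justification of the persistence lemma (particularly the matching between essential classes born by time $r$ and a basis of $V_p^r$, with representatives delivered by the persistence algorithm), since the rest of the argument is a routine matroid-style exchange once that lemma is in hand.
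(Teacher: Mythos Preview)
Your proof is correct and follows essentially the same approach as the paper: both arguments hinge on the fact that the essential persistence classes born by time $r$ in the filtration by $d_p$ span $V_p^r=\mathrm{im}\bigl(\mathsf{H}_d(B_p^r)\to\mathsf{H}_d(\K)\bigr)$, and both use a matroid-style exchange to produce a minimal basis inside $\mathcal{C}$. The only organizational difference is that the paper starts from a minimal basis and iteratively replaces each non-candidate cycle $C_i$ by some $C^*\in\mathcal{C}(p_i)$ with $\mu(C^*)\le\mu(C_i)$ and $[C^*]$ independent of $\mathcal{B}\setminus\{C_i\}$, whereas you build the basis $A_1,\ldots,A_g$ forward via the increasing subspaces $U_i$; your explicit statement of the persistence lemma is in fact cleaner than the paper's somewhat informal appeal to ``creators''.
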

\begin{proof}
Suppose not. Let $\mathcal{B}$ be any minimal homology basis and the elements in $\mathcal{B}$ are sorted in nondecreasing order of their sizes. Let class $C_i$ be the first member in $\mathcal{B}$ which is not in the candidate set and let $p$ be the vertex such that $C_i\subset B_p^{\mu(C_i)}$ where $\mu(C_i)$ is the size of the cycle $C_i$. 
First we claim that there exists a $d$-simplex $\sigma$ such that $d_p(\sigma)=\mu(C_i)$ and $\sigma$ is a creator of $C_i$. If not, there is another cycle $C'$ such that $[C']=[C_i]$ and $\mu(C')<\mu(C_i)$. Note that the cycles generated by creators in $B_p^{\mu(C_i)}$ form a homology basis of $B_p^{\mu(C_i)}$. 
We prove that the geodesic ball $B_p^{\mu(C_i)}$ must include a cycle $C^*\in \mathcal{C}$ such that the following two conditions hold: (1) $\mu(C^*)\leq \mu(C_i)$. (2) $\mathcal{B}\setminus\{C_i\}\cup \{C^*\}$ is a homology basis. 

Condition (1) holds because  $\mu(C)\leq \mu(C_i)$ for every cycle $C$ in $B_p^{\mu(C_i)}$. 

For (2), observe that there exists a homology class $[C^*]$ generated by one creator that is independent of homology classes generated by $\mathcal{B}\setminus\{C_i\}$. If no such cycle exists, any homology class generated by one creator of $B_p^{\mu(C_i)}$ can be written as a linear combination of homology classes generated by $\mathcal{B}\setminus\{C_i\}$. The homology classes generated by creators form a homology basis of $B_p^{\mu(C_i)}$ and $C_i\in B_p^{\mu(C_i)}$. It means that $[C_i]$ is not independent of $\mathcal{B}\setminus\{C_i\}$, contradicting the assumption that $\mathcal{B}$ is a homology basis. Therefore, $\mathcal{B}\setminus\{C_i\}\cup \{C^*\}$ is a homology basis.

Combining condition (1) with (2), the homology basis $\mathcal{B}'=\mathcal{B}\setminus\{C_i\}\cup \{C^*\}$ is a minimal homology basis. What is more, if we sorted the cycles in $\mathcal{B}'$ in nondecreasing order of sizes, then the first $i+1$ cycles in $\mathcal{B}'$ are in the candidate set $\mathcal{C}$. This is because the cycle $C^*$ is generated by a creator of $B_p^{\mu(C_i)}$, which means that $C^*\in \mathcal{C}$. Therefore, we find a minimal homology basis all of whose cycles are in the candidate set. \qed
\end{proof}

\subsubsection{Computing a minimal homology basis.}
In this section we discuss an algorithm to find a minimal homology basis from the candidate set. We use annotation, denoted by $a(\cdot)$, to represent and distinguish each cycle. Recall that annotation of a cycle is a $g$-bit vector were $g=rank(\mathsf{H}_d)$, and that two cycles are homologous if and only if their annotations are equal. We first compute the annotations for all $d$-simplices in $\K$ \cite{annotation} and give them a fixed order $\sigma_1, \sigma_2, \cdots, \sigma_{n_d}$ where $n_d$ is the number of $d$-simplices in $\K$. Suppose we sort the cycles in the candidate set in nondecreasing order of their sizes as $C_1, C_2, \cdots, C_{g n_0}$ where $n_0$ is the number of vertices in $\K$. Then, every $d$-cycle $C_i$ in $\K$ can be denoted as $C_i=\sum_{j=1}^{n_d} \gamma_{ij}\sigma_j$ where $\gamma_{ij}\in \{0, 1\}$ and $1\leq i\leq gn_0$. Thus, we have $a(C_i)=\sum_{j=1}^{n_d} \gamma_{ij}a(\sigma_j)$, $1\leq i\leq gn_0$. We compute the annotations $a(C_1), a(C_2), \cdots, a(C_{g n_0})$ for all cycles $C_1, C_2, \cdots, C_{g n_0}$ in the candidate set simultaneously.   

Let 
$
X=
(
a(C_{1})^T ,a(C_{2})^T, \cdots, 
a(C_{g n_0})^T )^T$ and 
$Y=
(a(\sigma_{1})^T, a(\sigma_{2})^T, \cdots, a(\sigma_{n_d})^T
)^T$. 
%\[
%X=
% \left( \begin{array}{c}
%a(C_{1}) \\
%a(C_{2})\\
%\vdots \\
%a(C_{g n_0})
%\end{array} \right);
%Y=
% \left( \begin{array}{c}
%a(\sigma_{1}) \\
%a(\sigma_{2})\\
%\vdots \\
%a(\sigma_{n_d})
%\end{array} \right)\]
The goal is to compute $X$ that satisfies the following equation: $X=\Gamma Y$ where $\Gamma=(\gamma_{ij})_{g n_0\times n_d}$. The computation of the matrix $X$ takes time $O(n^{\omega}g)$ using the fast matrix multiplication algorithm where $\Gamma$ is a $g n_0 \times n_d$ matrix and $Y$ is an $n_d\times g$ matrix. 

Let $X'$ be the transposed matrix of $X$. The problem of computing a minimal homology basis from the candidate set $\mathcal{C}$ is equivalent to computing the earliest basis of the matrix $X'$ \cite{annotation}. %Recall that the matrix $X'$ is a $g \times g n_0$ matrix, hence we can partition $X'$ into submatrices $X'=[X_1|X_2|\cdots]$ where every submatrix $X_i$ has size at most $g\times g$. We first compute the earliest basis of $X_1$ to be $X_J$, then iterate for every submatrix $X_i$. For every $i$, $X_J$ is computed as the early basis of $[X_1|X_2|\cdots|X_{i-1}]$. Then the early basis of $[X_1|X_2|\cdots|X_{i}]$ is the earliest basis of $[X_J|X_i]$. We update $X_J$ to the earliest basis of $[X_J|X_i]$. After iteration, $X_J$ is the earliest basis of $X$. As a result, the corresponding cycles form a minimal homology basis. 
According to Proposition \ref{prop:annotation}, computing the earliest basis of $X'$ costs us $O(ng^{\omega})$ time. Combining the time $O(n^{\omega+1})$ in building the candidate set $\mathcal{C}$ and the time $O(n^{\omega}g)$ in computing $X$, we conclude that the total running time is $O(n^{\omega+1})$.
\begin{theorem}
Given a simplicial complex $\K$ with $n$ simplices, there is an algorithm to compute a minimal homology basis as defined in Definition \ref{def:MCBd} in any dimension in time $O(n^{\omega+1})$.
\end{theorem}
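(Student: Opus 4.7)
The plan is to assemble the three running-time estimates stated informally earlier in the appendix into a clean $O(n^{\omega+1})$ bound, and then invoke the correctness result already established as a Claim. The whole argument really just packages the algorithm that was described in the appendix, so I will treat the work as three well-separated stages and account for the time in each.

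First I would handle the candidate-set construction. For each of the $n_0 \le n$ vertices $p$ we build the filtration ordered by $d_p(\cdot)$ (with every simplex placed before its cofaces of equal distance), and run the persistence algorithm on it, collecting exactly the $g$ essential $d$-classes along with their birth times $r_p(h)$. Sorting the simplices by $d_p$ can be done in $O(n\log n)$ per vertex after one-source shortest-path computations (which cost $O(n\log n)$ as well on the $1$-skeleton), and the persistence computation itself takes $O(n^\omega)$ per vertex; summed over the $O(n)$ base vertices this yields the dominant term $O(n^{\omega+1})$ for producing the candidate set $\mathcal{C}$ of size $O(gn)$. Correctness — that $\mathcal{C}$ contains a minimal homology basis — is exactly the Claim proved immediately before the theorem statement, so I will just cite it rather than reprove it.

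Next I would compute annotations and assemble them into a matrix. By Proposition~\ref{TimeAnnotation} we annotate every $d$-simplex in $O(n^\omega)$ time. Writing each candidate cycle as $C_i=\sum_j\gamma_{ij}\sigma_j$, the vector of all cycle annotations is obtained by the single matrix product $X=\Gamma Y$, where $\Gamma$ is $(gn_0)\times n_d$ and $Y$ is $n_d\times g$. I would bound this via the standard block trick: chop $\Gamma$ vertically into $O(n_d/g)$ blocks of size $g\times n_d$ and $\ldots$ (or, more simply, into $n/g$ blocks so each block multiplication is of two $O(g)\times O(g)$-type matrices after re-blocking), giving $O((n/g)\cdot g^{\omega})\cdot (g) = O(n^{\omega}g)$ total, which is absorbed into $O(n^{\omega+1})$.

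Finally I would extract a minimal homology basis from $\mathcal{C}$ by taking the earliest basis of the transposed annotation matrix $X'$. Proposition~\ref{prop:annotation} gives the earliest basis of an $m\times n$ rank-$r$ matrix in $O(mn^{\omega-1})$ time; here $m=gn_0=O(gn)$ and the relevant dimension is $g$, so the cost is $O(ng^{\omega}) = O(n^{\omega+1})$. That the earliest basis is indeed a \emph{minimal} homology basis is guaranteed because the candidate cycles were already sorted in non-decreasing size order and because $\mathcal{C}$ contains at least one minimal homology basis; the standard matroid-style exchange argument underlying the earliest-basis routine then implies lexicographic optimality of the extracted size sequence. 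Summing the three stages yields $O(n^{\omega+1})+O(n^\omega)+O(n^{\omega}g)+O(ng^{\omega})=O(n^{\omega+1})$, which is the claimed bound.

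The main obstacle in this proof is not any single hard step but rather the candidate-set stage: one must be convinced that running $n$ independent persistence computations with different vertex-based filtrations still fits in $O(n^{\omega+1})$, and that the resulting $O(gn)$ cycles really do cover some minimal homology basis in every dimension. The first point is routine once the per-vertex cost is isolated; the second is precisely the Claim established just above the theorem, whose exchange argument (replacing a non-edge-short cycle in a hypothetical minimal basis by a creator-generated cycle in the geodesic ball $B_p^{\mu(C_i)}$) is the real content and is already proved. Everything else in the theorem is bookkeeping on matrix dimensions.
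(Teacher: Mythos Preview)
Your proposal is correct and follows essentially the same approach as the paper: three stages consisting of per-vertex persistence to build the candidate set in $O(n^{\omega+1})$, annotation of simplices plus the matrix product $X=\Gamma Y$ in $O(n^{\omega}g)$, and extraction of the earliest basis of $X'$ in $O(ng^{\omega})$, all summed to $O(n^{\omega+1})$. The correctness appeal to the preceding Claim is exactly what the paper does as well; your block-multiplication description for $\Gamma Y$ is a bit garbled but lands on the same $O(n^{\omega}g)$ bound the paper states.
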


\section{Proof of Claim \ref{claim:SPexists}}
\label{appendix:claim:SPexists}

We prove this claim by induction. 
First, fix any source node $x \in vert(\K)$. We sort all other vertices in $vert(\K)$ in non-decreasing order of $d(x,y)$; that is, 
$d(x, y_1)\le d(x, y_2) \le \ldots, \le d(x, y_{s})$ with $s = |vert(\K)|-1$. 
We carry out an induction based on this order. 
For the base case, any path in (ii) of Definition \ref{def:distance} is necessarily a \myshortestpath{} from $x$ to $y_1$: Indeed, if there is any other vertex $y$ (other than $x$ and $y_1$) in such a path, it is necessary that $d(x,y) = d(x, y_1)$ as $d(x,y_1)$ has the smallest distance to $x$. 

Now suppose there exists a \myshortestpath{} from $x$ to $y_i$ for $1\leq i\leq s$. 
Consider $y_{i+1}$ and assume that there is no \myshortestpath{} from $x$ to $y_{i+1}$. 
By Definition \ref{def:distance}, there exists a path $\Pi=(u_0=x, u_1, \ldots, u_k=y_{i+1})$ such that for every vertex $u_i\in \Pi$, $d(x, u_i)\leq d(x, y_{i+1})$. 
As this path violates the conditions in Definition \ref{def:SPpath}, there must exist a pair of vertices $u_j, u_l\in \Pi$ with $j<l$ such that $d(x, y_{i+1})\geq d(x, u_j)>d(x, u_l)$. Let $l$ be the maximal value with which such a pair $(j, l)$ exists. It follows that we have $d(x, u_l) \le d(x, u_{l+1}) \le \ldots \le d(x, u_k)$. By inductive hypothesis, we know that there is a \myshortestpath{} $\Pi^*$ from $x$ to $u_l$ since $d(x, u_l)<d(x, y_{i+1})$. Hence, the path $\Pi^*$ concatenated with the sub-path of $\Pi$ from $u_l$ to $u_k = y_{i+1}$ gives a \myshortestpath{} from $x$ to $y_{i+1}$. 
The claim thus follows from induction. 

%%%%%%%%%%%%%%%%%%%%%%%%%%%%%%%%%%%%%%%%%%%
\section{Ensuring uniqueness of shortest paths}
\label{appendix:ObtainLex}
\begin{wrapfigure}{r}{5cm}
\centering
\begin{tikzpicture}[scale=0.7]
	\tikzstyle{every node}=[font=\small,scale=0.8]
    \tikzstyle{point}=[{draw, fill=white, circle, inner sep=0.5pt}]
    \node (a1)[point] at (0,0) {};
    \node[left] at (0, 0) {$a_1$};
    
    \node (a4)[point] at (1,-.5) {};
    \node[above] at (1, -.5){$a_4$};
    
    \node (a5)[point] at (1.5, -1) {};
    \node[right] at (1.5, -1) {$a_5$};
    
    \node (a3)[point] at (.5, -1) {};
    \node[left] at (0.5, -1) {$a_3$};
    
    \node (a2)[point] at (2, 0) {};
    \node[right] at (2, 0) {$a_2$};
   
    \draw plot [smooth] coordinates {(a1)(a4)(a5)(a3)(a4)(a2)};
    \end{tikzpicture}
    \caption{Path $\pi_1=a_1a_4a_5a_3a_4a_2$ and $\pi_2=a_1a_4a_3a_5a_2$ are two \myshortestpath{}s from $a_1$ to $a_2$. Consider a new path $\pi=a_1a_4a_2$ which is a \myshortestpath{} from $a_1$ and $a_2$. However $len(\pi)=2<5=len(\pi_1)=len(\pi_2)$.}
    \vspace{-20pt}
  \label{exampleLexPath}
\end{wrapfigure}
In section \ref{sec:generalization}, we require that the \myshortestpath{} (in this section, we use shortest path for short) in $\mathcal{K}$ between any two vertices is unique. Now we show how to avoid this restriction using an idea from \cite{Hartvigsen1994}. 
\begin{lemma}\label{lex}
 Let $\mathcal{K}$ be a simplicial complex with a path-dominated distance $d(\cdot, \cdot)$. For every pair of nodes, there exists a unique shortest path $\pi$ from $u$ to $v$ that satisfies exactly one of the following two conditions w.r.t. any other path $\pi'$ from $u$ to $v$:\\
(1) $len(\pi)<len (\pi')$\\
(2) $len(\pi)=len(\pi'), min(vert(\pi)\setminus vert(\pi')) < min(vert(\pi')\setminus vert(\pi))$\\
Here $len(\pi)$ denotes the number of edges in a path $\pi$ and $min(U)$ denotes the minimum index of the vertices in a subset $U$ of $vert(\K)$. We say $\pi<\pi'$ if the above two conditions hold.
\end{lemma}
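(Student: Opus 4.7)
The plan is to prove the lemma by first restricting to \myshortestpath{}s of minimum edge count and then using the lexicographic rule (2) to select a unique representative among them.

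By Claim~\ref{claim:SPexists}, the set of \myshortestpath{}s from $u$ to $v$ is non-empty. I would first show that the subset $\mathcal{P}_{\mathrm{min}}$ of \myshortestpath{}s attaining the minimum number of edges consists only of simple paths. The key step is a shortcutting argument: if $\pi = u_0,\ldots,u_k$ is a \myshortestpath{} with $u_j = u_i$ for some $j < i$, then the running-max property $d(u_0,u_l) = \max_{m\le l} d(u_0,u_m)$ forces all $d(u_0,u_l)$ for $j \le l \le i$ to coincide, so the shortcut $u_0,\ldots,u_j,u_{i+1},\ldots,u_k$ remains a \myshortestpath{} with strictly fewer edges. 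Iterating shows every member of $\mathcal{P}_{\mathrm{min}}$ is simple, so $\mathcal{P}_{\mathrm{min}}$ is a finite, non-empty collection of simple $u$-to-$v$ paths.

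Next I would verify that the relation $<$ defined by (1) and (2) restricts to a strict total order on $\mathcal{P}_{\mathrm{min}}$. Since all members share the same length, (1) never applies, and the ordering reduces to the lexicographic comparison in (2) on the smallest element of the symmetric difference of the two vertex sets. Using the convention $\min(\emptyset) = +\infty$, this relation is antisymmetric and transitive; two distinct simple paths in $\mathcal{P}_{\mathrm{min}}$ with differing vertex sets are separated by (2). Being a finite non-empty set under a strict total order, $\mathcal{P}_{\mathrm{min}}$ has a unique minimum, which I take as $\pi$.

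Finally, for any other path $\pi' \neq \pi$ from $u$ to $v$, I would verify that exactly one of (1) or (2) holds against $\pi$: if $\pi' \notin \mathcal{P}_{\mathrm{min}}$ then $len(\pi) < len(\pi')$ and (1) applies, while if $\pi' \in \mathcal{P}_{\mathrm{min}}$ then minimality of $\pi$ gives (2). The main obstacle I expect is cleanly treating the degenerate case in (2) where two distinct simple paths of equal length happen to carry the same vertex set; the statement is silent here, so the argument will need either to refine (2) into a full lexicographic comparison on the sorted vertex sequence (using a fixed global ordering of $vert(\K)$), or to use the \myshortestpath{} property to rule out such coincidences inside $\mathcal{P}_{\mathrm{min}}$ -- for instance by showing that the prefix \myshortestpath{} property, combined with the uniqueness of distances along the path, forces equal-vertex-set simple \myshortestpath{}s to be identical as edge sequences.
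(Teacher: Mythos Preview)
The paper supplies no proof of its own here; it writes only ``The proof follows from \cite[Proposition 4.1]{Hartvigsen1994}.'' So there is nothing in the paper to compare against beyond that citation, and your proposal is effectively a reconstruction of what that reference does.

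Your plan---restrict to \myshortestpath{}s of minimum edge-count $\mathcal{P}_{\mathrm{min}}$, show these are simple via the shortcutting you describe, then select the $<$-minimum among this finite set---is correct and is what the Hartvigsen argument (originally for ordinary shortest paths) amounts to. One reading note: your final case split ``if $\pi'\notin\mathcal{P}_{\mathrm{min}}$ then $len(\pi)<len(\pi')$'' only holds when $\pi'$ is itself a \myshortestpath{}, so you are (correctly, given the context of the appendix) interpreting ``any other path'' in the lemma as ``any other \myshortestpath{}''; read literally over all paths the statement would be false.

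On the degenerate case you flag: your option (a) as phrased (comparing the \emph{sorted} vertex sequence) cannot separate two paths with equal vertex sets, so drop it. Your option (b), however, does go through---not via ``uniqueness of distances along the path'' (distances can repeat), but by another shortcutting step. Suppose $\pi=u_0,\dots,u_k\in\mathcal{P}_{\mathrm{min}}$ and $\pi'\ne\pi$ is any path with $len(\pi')=k$ and $vert(\pi')=vert(\pi)$; then $\pi'$ is simple, and if $i$ is the first index where $u_i'\ne u_i$, we have $u_i'=u_j$ for some $j>i$, so the edge $(u_{i-1},u_j)=(u'_{i-1},u'_i)$ lies in $\K^{(1)}$. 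The path $u_0,\dots,u_{i-1},u_j,u_{j+1},\dots,u_k$ is then a \myshortestpath{} (its $d(u_0,\cdot)$-values form a subsequence of a non-decreasing sequence) of length $k-(j-i)<k$, contradicting $\pi\in\mathcal{P}_{\mathrm{min}}$. Hence distinct equal-length paths never share a vertex set with $\pi$, condition (2) is always decisive, and your argument closes.
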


The proof follows from \cite[Proposition 4.1]{Hartvigsen1994}.
%\begin{proof}
%See Figure \ref{exampleLexPath}. Let \textbf{P} be the set of all shortest paths from $u$ to $v$ such that $len(\pi^*)$, $\pi^*\in \textbf{P}$, is minimum. It suffices to show that for every $\pi_1, \pi_2\in \textbf{P}$, $vert(\pi_1)\neq vert(\pi_2)$, i.e. $vert(\pi_1) \setminus vert(\pi_2)\neq\varnothing$ and $vert(\pi_2) \setminus vert(\pi_1)\neq\varnothing$, since $\pi_1$ and $\pi_2$ have the same number of edges and vertices. Assume that $vert(\pi_1)=vert(\pi_2)$. Let us rename the nodes so that $\pi_1$ contains the nodes $u_1=u, u_2, \cdots, u_k=v$ in this order and $\pi_2$ contains the nodes $v_1=u, v_2, \cdots, v_k=v$ in this order. Since $\pi_1\neq \pi_2$, the sequences $u_1,\cdots, u_k$ and $v_1,\cdots, v_k$ are not equal. Let $j$ be the smallest index such that $u_j\neq v_j$. Hence in $\pi_2$, $u_j$ must appear after $v_j$ in the sequence $v_1,\cdots, v_k$, which means that there is a $l>j$ with $v_l=u_j$. By definition of shortest path, $d(u, v_j)\leq d(u, u_j)$.
%Consider the path $\pi'$ that consists of a subpath $\pi'_1$ of $\pi_1$ from $u_1=u$ to $u_j=v_l$ and a subpath $\pi'_2$ of $\pi_2$ from $v_l=u_j$ to $v_k=v$. It is easy to see that $\pi'$ is also a shortest path from $u$ to $v$ by definition. The way we choose the vertex $u_j$ suggests that $len(\pi'_2)$ is smaller than the number of vertices in $\pi_1$ from $u_j$ to $u_k$, which means that $len(\pi')<len(\pi_1)=len(\pi_2)$ reaching a contradiction.\qed
%\end{proof}

We now describe the algorithm to compute a shortest path tree $T_p$ w.r.t. a path-dominated distance $d(\cdot, \cdot)$ rooted at $p$ under the uniqueness condition. Let $\pi_p(q)$ be the tree path from $p$ to $q$ in the current partial tree. Initially we set a priority queue $Q$ the vertex set $vert(\K)$. Every time we delete a vertex $q$ in the queue $Q$ with the least distance $d(p, q)$, least value $len_p(q)$ and least index. We iterate for all neighbors $w$ of $q$: If $\pi_p(q)\circ e$, $e=(q,w)$, is a shortest path from $p$ to $w$, and is smaller than $\pi_p(w)$ as in Lemma \ref{lex}
%we consider the two conditions in Lemma \ref{lex} to find the smaller one of $\pi_p(w)$ and $\pi_p(q)\circ e$, 
we will update the tree path $\pi_p(w)$ in $T_p$ as $\pi_p(q)\circ e$. Note that those vertices not in $Q$ will not be updated. Hence there are $O(n)$ iterations. 
%Let $len_p(u)$ be the number of edges in the tree path from $p$ to $u$ and $parent_p(u)$ be the parent of $u$ in tree $T_p$. Let $\pi_p(u, v)$ denote the unique path in tree $T_p$ from $u$ to $v$. We check for all neighbors $w$ of $q$ for the two conditions in Lemma \ref{lex}. Note that those vertices not in $Q$ will not be updated. Hence there are $O(n)$ iterations. 
%\begin{algorithm}
%\caption{ShortestPathTree}\label{UniqueShortestPathTree}
% \begin{algorithmic}[1] 
%	\State Priority queue $Q\gets vert(\K)$; $len_p(p)\gets 0$, $len_p(u)\gets \infty$, $u\neq p$; $parent_p(v)\gets 0$, $\forall v\in vert(\mathcal{K})$
%	\While{$Q\neq \varnothing$}
%		\State $q=Q.delMin()$
%		\For{every neighbor $w$ of $q$}
%			\If{$d(p, q)\leq d(p, w)$}
%				\State $r=parent_p(w)$
%				\If{$len_p(q)+1<len_p(w)$}
%					\State $parent_p(w)=q$
%					\State $len_p(w)=len_p(q)+1$
%				\Else
%					\If{$len_p(q)+1=len_p(w)$ and $min(vert(\pi_p(p, q)\circ (q,w))\setminus vert(\pi_p(p, r)\circ (r,w)))<min(vert(\pi_p(p, r)\circ (r,w))\setminus vert(\pi_p(p, q)\circ (q,w)))$}
%						\State $parent_p(w)=q$
%						\State $len_p(w)=len_p(q)+1$
%						\EndIf
%				\EndIf
%	\EndIf
%	\EndFor
%	\EndWhile\\
%	\Return $parent_u$
%	\end{algorithmic}
%\end{algorithm}
What remains is to compute the minimum index of $vert(\pi)\setminus vert(\pi')$ given two tree paths $\pi$ and $\pi'$ from $p$ to any vertex $v$. This can be achieved in time $O(\log n)$ adapting the algorithm from~\cite{wulff2009minimum} for \myshortestpath{}. 

Thus we conclude the above analysis with the following theorem.
\begin{theorem}
The shortest path tree in a simplicial complex $\mathcal{K}$ can be computed in $O(n\log n)$ time.
\end{theorem}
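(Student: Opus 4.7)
The plan is to adapt Dijkstra's shortest-path algorithm to compute the unique \myshortestpath{} from the root $p$ to every other vertex under the lexicographic order of Lemma \ref{lex}. I would maintain a priority queue over vertices keyed by the triple $(d(p,q), len_p(q), \mathrm{idx}(q))$, where $\mathrm{idx}(q)$ denotes the index of $q$; the main loop extracts the minimum element $q$, marks it settled, and relaxes each incident edge $(q,w)$ by checking whether the candidate path $\pi_p(q) \circ (q,w)$ is itself a \myshortestpath{} and is strictly smaller than the current $\pi_p(w)$ under the order from Lemma \ref{lex}.

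Correctness would follow from a standard inductive argument on the extraction order. When a vertex $q$ is popped, the first two components of the key give the ordinary Dijkstra invariant: no unsettled vertex can produce a tree path to $q$ of smaller $d(p,\cdot)$, or of equal distance but smaller edge-length. The third component together with the Lemma \ref{lex} comparison performed at relaxation time ensures that, among candidate paths of equal distance and equal length, we retain the one that is lexicographically smallest on the minimum-index criterion; by Lemma \ref{lex} this selects the unique \myshortestpath{} to each vertex.

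For the running time, the priority queue performs $O(n)$ extract-mins and $O(m) = O(n)$ decrease-keys (the latter because $n$ counts all simplices of $\K$ and hence bounds the edge count), each in $O(\log n)$ time with a binary heap; the queue key comparisons themselves are $O(1)$. The main obstacle is the per-relaxation test of Lemma \ref{lex} between two tree paths $\pi_p(q) \circ e$ and $\pi_p(w)$, which reduces to computing the minimum element of the symmetric difference $vert(\pi) \setminus vert(\pi')$ for two root-to-$w$ paths in the evolving tree $T_p$. I would handle this by maintaining, alongside $T_p$, the auxiliary structure adapted from~\cite{wulff2009minimum}, which supports each such symmetric-difference-minimum query and each tree-update caused by a relaxation in $O(\log n)$ time. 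With $O(n)$ relaxations overall, this contributes $O(n\log n)$ additional work, and summing all contributions yields the claimed $O(n\log n)$ bound.
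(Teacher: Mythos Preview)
Your proposal is correct and follows essentially the same approach as the paper: a Dijkstra-style sweep with a priority queue keyed by $(d(p,q),\,len_p(q),\,\text{index})$, relaxation that tests the Lemma~\ref{lex} order, and delegation to the structure of~\cite{wulff2009minimum} for the $O(\log n)$ minimum-index path comparison. The paper's own argument is in fact terser than yours on both correctness and the time accounting, so there is nothing to add.
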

%%%%%%%%%%%%%%%%%COUNTER-EXAMPLE%%%%%%%%%%%%%%%%
%\section{A counterexample}\label{appendix:counterexample}

\end{document}